\theoremstyle{plain}
\newtheorem{thm}{Theorem}[section]
\newtheorem{lem}[thm]{Lemma}
\newtheorem{prop}[thm]{Proposition}
\newtheorem{cor}[thm]{Corollary}
\theoremstyle{definition}
\newtheorem{eg}[thm]{Example}
\theoremstyle{remark}
\newtheorem{rmk}[thm]{Remark}
\numberwithin{equation}{section}
\def\N{{\mathbb N}}
\def\Z{{\mathbb Z}}
\def\Q{{\mathbb Q}}
\def\R{{\mathbb R}}
\def\C{{\mathbb C}}
\def\A{{\mathbb A}}
\def\cH{\mathcal{H}}
\def\cL{\mathcal{L}}
\def\I{\mathcal{I}}
\def\J{\mathcal{J}}
\def\O{\mathcal{O}}
\def\fa{\mathfrak{a}}
\def\a{\alpha}
\def\b{\beta}
\def\g{\gamma}
\def\f{\phi}
\def\ff{\psi}
\def\e{\eta}
\def\k{\kappa}
\def\l{\lambda}
\def\n{\nu}
\def\p{\pi}
\def\r{\rho}
\def\s{\sigma}
\def\t{\tau}
\def\x{\xi}
\def\D{\Delta}
\def\G{\Gamma}
\def\Om{\Omega}
\let\oldhat\^
\let\oldtilde\~
\def\.{\cdot}
\def\^{\widehat}
\def\~{\widetilde}
\def\o{\circ}
\def\ov{\overline}
\def\rat{\dashrightarrow}
\def\inj{\hookrightarrow}
\def\de{\partial}
\def\liminv{\underleftarrow{\lim}}
\renewcommand{\and}{ \ \ \text{ and } \ \ }
\def\sm{\mathrm{sm}}
\def\sing{\mathrm{sing}}
\def\ver{\mathrm{exc}}
\def\hor{\mathrm{hor}}
\def\prim{\mathrm{prim}}
\DeclareMathOperator{\Spec} {Spec}
\DeclareMathOperator{\Ex} {Ex}
\DeclareMathOperator{\ord} {ord}
\DeclareMathOperator{\Hom} {Hom}
\DeclareMathOperator{\Min} {Min}
\DeclareMathOperator{\Ter} {Ter}
\begin{document}

\title{Terminal valuations and the Nash problem}

\author{Tommaso de Fernex}
\address[T.\ de Fernex]{Department of Mathematics, University of Utah,
155 South 1400 East, Salt Lake City, UT 48112-0090, 
USA}
\email{defernex@math.utah.edu}

\author{Roi Docampo}
\address[R.\ Docampo]{Instituto de Matem\'{a}tica e Estat\'{\i}stica, 
Universidade Federal Fluminense,
Rua M\'{a}rio Santos Braga, s/n, 24020-140 Niter\'{o}i, RJ, 
Brasil}
\email{roi@mat.uff.br}

\thanks{The research of the first author was partially supported by NSF
CAREER grant DMS-0847059 and NSF FRG Grant DMS-1265285.
The research of the second author was partially supported by
CNPq/Ci\oldhat{e}ncia Sem Fronteiras, under the program Atra\c{c}\oldtilde{a}o de
Jovens Talentos, Processo 370329/2013--9.
}


\subjclass[2010]{Primary 14E18; Secondary 14E30, 14J17}
\keywords{Nash problem, arc space, minimal model, valuation} 

\begin{abstract}
Let $X$ be an algebraic variety of characteristic zero. Terminal valuations
are defined in the sense of the minimal model program, 
as those valuations given by the exceptional divisors on a minimal
model over $X$. We prove that every terminal valuation over $X$
is in the image of the Nash map, and thus it corresponds to a
maximal family of arcs through the singular locus of $X$.
In dimension two, this result gives a new proof of the theorem of
Fern\'andez de Bobadilla and Pe Pereira stating that, 
for surfaces, the Nash map is a bijection. 
\end{abstract}

\maketitle

\thispagestyle{empty}


\section{Introduction}

Working in characteristic zero, the space of formal arcs 
passing through the singular points of an algebraic variety $X$
decomposes into finitely many irreducible families, 
and carries some of the essential information encoded in a resolution of singularities.
The \emph{Nash map} associates a divisorial valuation to 
every maximal irreducible family of arcs through the singular locus
of $X$ \cite{Nas95}. 
In this paper, we will refer to these valuations as the \emph{Nash valuations} over $X$.

The Nash problem asks for a geometric characterization of Nash valuations
in terms of resolutions of $X$. To this end, Nash
introduced the notion of \emph{essential valuations}
as those divisorial valuations whose center on every resolution
is an irreducible component of the inverse image of the singular locus of $X$. 
It is easy to see that every Nash valuation is essential, and Nash asked whether
the converse is also true. Regarded as a function 
to the set of essential valuations, the question is whether 
the Nash map is surjective. 

The Nash problem was successfully settled in dimension two in \cite{FdBPP12}
using topological arguments
(we refer to their paper for a comprehensive list
of references on previous results).
In higher dimensions, the characterization of Nash valuations as essential valuations 
is known to hold for toric singularities and in some other special 
cases~\cite{IK03,Ish05,Ish06,GP07,PPP08,LJR12,LA}.
However, examples showing that the Nash map is not always surjective
were found in all dimensions $\ge 3$~\cite{IK03,dF13,JK}.
In view of these examples, one should rephrase the problem 
by asking whether there is some other way to characterize the image of the Nash map. 

In this paper, we approach this problem from the point of view of
the minimal model program.
Recall that a minimal model over $X$ is a projective birational
morphism $f \colon Y \to X$ from a normal variety $Y$ with terminal singularities
such that the canonical class $K_Y$ is relatively nef over $X$.
We say that a divisorial valuation $\n$ on $X$ is 
\emph{terminal with respect to the minimal model program over $X$}, 
or simply that it is a \emph{terminal valuation} over $X$, 
if there exists a prime exceptional divisor $E$ on a minimal model $f \colon Y \to X$ 
such that $\n = \ord_E$.

Our main result provides a characterization of a subset of the image of the Nash map
as the one consisting of terminal valuations.

\begin{thm}
\label{t:main}
Every terminal valuation over $X$ is a Nash valuation. 
\end{thm}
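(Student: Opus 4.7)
The plan is to use the well-known reformulation of the Nash problem in terms of maximal divisorial sets: for any divisorial valuation $u$ over $X$, the associated maximal divisorial set $C_X(u) \subset X_\infty$ is the closed subset characterized by $\alpha \in C_X(u) \iff \ord_\alpha \ge u$, and $u$ is a Nash valuation if and only if $C_X(u)$ is an irreducible component of $\pi^{-1}(X_\sing)$ not contained in the arc space $(X_\sing)_\infty$. Since the comparison of such sets is governed by $C_X(u) \subset C_X(u') \iff u \ge u'$ on $\cO_X$, the problem reduces, for $v = \ord_E$ with $E$ a prime exceptional divisor on a minimal model $f \colon Y \to X$, to establishing (i) that $C_X(v) \subset \pi^{-1}(X_\sing) \setminus (X_\sing)_\infty$, and (ii) that no divisorial valuation $w \ne v$ with $w \le v$ on $\cO_X$ has center contained in $X_\sing$ together with $C_X(w) \supsetneq C_X(v)$.

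Step (i) uses the MMP hypotheses only lightly. If $f(E)$ met the smooth locus $X_\sm$, then over a neighborhood of a generic point of $f(E) \cap X_\sm$ one would have a projective birational morphism from the terminal $Y$ to a smooth variety with $E$ being $f$-exceptional. Comparing $K_Y$ to $f^* K_X$ gives $K_Y - f^* K_X = \sum a_i E_i$ with $a_i = a(E_i; X) \ge 1$ by positivity of discrepancies over a smooth base; on the other hand, $K_Y$ being $f$-nef and $f^* K_X$ being $f$-numerically trivial imply that $\sum a_i E_i$ is $f$-nef, so the negativity lemma of Shokurov forces $a_i \le 0$, a contradiction. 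Thus $f(E) \subset X_\sing$, and consequently $C_X(v) \subset \pi^{-1}(X_\sing)$; that $C_X(v) \not\subset (X_\sing)_\infty$ is automatic from the finiteness of $\ord_E(\cI_{X_\sing})$.

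Step (ii), the main content of the theorem, is approached by contradiction. Assume a divisorial $w \ne v$ with $w \le v$ on $\cO_X$ and $C_X(w) \supsetneq C_X(v)$ exists, and take a common log resolution $\mu \colon Z \to Y$ realizing both $v$ and $w$ as prime divisors $E_Z$ and $F_Z$. The codimension formula $\codim_{X_\infty} C_X(u) = \^a(u; X)$ for the Mather log discrepancy, combined with the strict inclusion $C_X(v) \subsetneq C_X(w)$, yields $\^a(v; X) > \^a(w; X)$. Decomposing each Mather log discrepancy along the chain $Z \to Y \to X$ expresses $\^a(u; X)$ for a divisorial $u$ over $Y$ in terms of the discrepancy $a(u; Y)$ over $Y$ and the order $u(\Jac_f)$ of the Jacobian of $f$; since $v \ge w$ on $\cO_X$, the Jacobian-ideal contribution satisfies $v(\Jac_f) \ge w(\Jac_f)$.

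The heart of the proof---and the step I expect to be the main technical obstacle---is to extract from the minimal-model data of $Y$ a sharp inequality among the $Y$-level discrepancies that reverses the Mather log discrepancy inequality and yields a contradiction. Terminality of $Y$ provides the key positivity $a(u; Y) > 0$ whenever $u$ is exceptional over $Y$, while $a(v; Y) = 0$ since $v = \ord_E$ is itself a prime divisor on $Y$; the $f$-nefness of $K_Y$ enters, via the negativity lemma applied to a suitable $f$-exceptional cycle on $Y$ encoding the comparison of $v$ and $w$, to control the sign of the remaining Jacobian-order difference. Together these force $\^a(v; X) \le \^a(w; X)$, contradicting the codimension inequality and showing that $C_X(v)$ is maximal. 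This is precisely the step where the hypothesis \emph{terminal} (rather than merely essential) is needed: without $K_Y$ relatively nef, the negativity-lemma argument collapses, and the known counterexamples to surjectivity of the Nash map in dimension $\ge 3$ correspond exactly to essential valuations that are not realized on any minimal model.
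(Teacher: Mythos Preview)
Your Step~(i) is fine, and the overall reformulation via maximal divisorial sets and the codimension formula $\codim_{X_\infty} C_X(u) = \^a(u;X)$ is a reasonable starting point. But Step~(ii), which you yourself identify as the heart of the proof, is not actually carried out: you assert that ``the $f$-nefness of $K_Y$ enters, via the negativity lemma applied to a suitable $f$-exceptional cycle on $Y$ encoding the comparison of $v$ and $w$,'' yet no such cycle is constructed, and no mechanism is given for how the comparison $w \le v$ on $\cO_X$ produces anything $f$-exceptional on $Y$ to which the negativity lemma could apply. A related concrete problem: your claim that $v(\Jac_f) \ge w(\Jac_f)$ follows from $v \ge w$ on $\cO_X$ is unjustified, since $\Jac_f$ (the Fitting ideal of $\Omega_{Y/X}$, equivalently the ideal defining $\^K_{Y/X}$) lives on $Y$, not on $X$, and $v \ge w$ on $\cO_X$ does not imply the same inequality on $\cO_Y$. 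Without that inequality the proposed decomposition of $\^a(\,\cdot\,;X)$ through $Y$ gives no usable comparison. More broadly, a static numerical argument on $Y$ seems insufficient: the condition $w \le v$ on $\cO_X$ simply does not translate into a divisorial inequality on $Y$, and there is no evident $f$-exceptional $\Q$-divisor on $Y$ built from $v$ and $w$ alone.

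The paper's proof is genuinely different and supplies exactly the missing geometric input. Assuming $\ord_E$ is not Nash, Reguera's curve selection lemma (Theorem~\ref{t:csl}) produces a wedge $\Phi\colon \Spec K[[s,t]] \to X$ that does not lift to $Y$ and whose special arc meets $E$ with order of contact one. One then resolves the indeterminacy of $f^{-1}\circ\Phi$ to obtain a two-dimensional regular scheme $Z$ over $S=\Spec K[[s,t]]$ with maps $g\colon Z\to S$ and $\f\colon Z\to Y$ (and an intermediate $Z'$). All the intersection theory and the negativity lemma are applied on this two-dimensional $Z$ (and $Z'$), not on $Y$: one compares $K_{Z/S}$, $h^*K_{Z'/S}$, and $h^*K_{Z'/Y}^{\ver}$ along the exceptional component $G$ meeting the proper transform of the $t$-axis. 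The $f$-nefness of $K_Y$ enters only to make $\f'^*K_Y$ relatively nef over $S$, which feeds into the negativity lemma on $Z'$; terminality of $Y$ enters to make $K_{Z'/Y}\ge 0$. The contradiction is the chain $1 \le \ord_G(K_{Z/S}) \le \ord_G(h^*K_{Z'/Y}^{\ver}) < \ord_G(\f^*E) = 1$. The wedge is what creates a two-dimensional arena where the negativity lemma has traction; your proposal tries to run the same lemma directly on $Y$ without such an arena, and that is where it breaks.
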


Since, in dimension two, terminal valuations and essential valuations are clearly the same
(both are the valuations defined by the exceptional divisors on the minimal
resolution), we obtain a new, purely algebro-geometric proof of the main theorem of \cite{FdBPP12}.

\begin{cor}[\cite{FdBPP12}]
\label{c:main-dim2}
The Nash map is a bijection in dimension two.
\end{cor}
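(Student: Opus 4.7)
The plan is to deduce the corollary from Theorem \ref{t:main} by identifying, in dimension two, the class of terminal valuations with the class of essential valuations, and then invoking the standard injectivity of the Nash map.

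First I would recall the two easy containments among the three classes of valuations. The Nash map is injective by construction: distinct maximal families of arcs through $\Sing(X)$ have distinct generic arcs, whose associated order-of-vanishing functions are different divisorial valuations. The image of the Nash map is contained in the set of essential valuations: if $\n$ is associated to a maximal family, then on any resolution $X' \to X$ the generic arc of the family lifts, so the center of $\n$ on $X'$ is an irreducible component of the preimage of $\Sing(X)$. Combining this with Theorem~\ref{t:main}, one has in any dimension the inclusions
\[
\{\text{terminal valuations}\} \;\subseteq\; \{\text{Nash valuations}\} \;\subseteq\; \{\text{essential valuations}\}.
\]
Thus to prove the corollary in dimension two it suffices to check that, for surfaces, the outer two classes coincide, and then to observe that the Nash map, being injective onto a set it covers, is a bijection.

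The key step is therefore the identification of terminal and essential valuations when $\dim X = 2$. Since a two-dimensional terminal singularity is smooth, any minimal model $f \colon Y \to X$ is in fact a resolution of singularities, and the relative nefness of $K_Y$ over $X$ forces $f$ to be the minimal resolution of $X$ (there are no $(-1)$-curves contracted by $f$). Conversely, the minimal resolution does have $K_Y$ $f$-nef, so it is a minimal model and its exceptional prime divisors yield terminal valuations. Hence the terminal valuations over a surface are exactly the divisorial valuations given by prime exceptional divisors on the minimal resolution. On the other hand, by the classical universal property of the minimal resolution of a normal surface singularity, every resolution of $X$ factors through the minimal resolution; it follows that every exceptional divisor of the minimal resolution appears on every resolution of $X$ (and dominates a component of the preimage of $\Sing(X)$), so it defines an essential valuation. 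Since any essential valuation must of course have nonempty center in the exceptional locus of the minimal resolution, the two classes agree.

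I do not expect a serious obstacle here, since the main content of the result is packaged in Theorem~\ref{t:main}; the only step requiring care is the clean identification of terminal and essential valuations for surfaces, which rests on the two standard facts that surface terminal singularities are smooth and that the minimal resolution is dominated by every resolution. Putting the pieces together, essential $=$ terminal $\subseteq$ Nash $\subseteq$ essential for surfaces, so the three classes coincide and the injective Nash map is a bijection.
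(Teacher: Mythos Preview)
Your proposal is correct and follows exactly the approach the paper indicates: the paper simply remarks that in dimension two terminal valuations and essential valuations are ``clearly the same (both are the valuations defined by the exceptional divisors on the minimal resolution)'' and then invokes the chain of inclusions from Theorem~\ref{t:main}. You have spelled out the details of this identification (terminal surface singularities are smooth, relative nefness of $K_Y$ forces the minimal resolution, and the universal property of the minimal resolution shows every exceptional divisor there is essential), which is precisely what the paper leaves implicit.
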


Theorem~\ref{t:main}
is the natural generalization of this result to higher dimensions. 
It implies, for instance, that the Nash map is surjective whenever 
there exists a nonsingular minimal model over $X$ with exceptional locus
of pure codimension one.
Since every exceptional divisor over a variety with terminal singularities
is uniruled \cite{HM07}, Theorem~\ref{t:main} also implies, as a by-product, that
every divisorial valuation defined by a divisor that is not uniruled
is necessarily a Nash valuation. This recovers, in particular, 
the main result of \cite{LJR12}, whose proof is however simpler and more direct. 

At a first sight, one may wonder whether all Nash valuations are terminal valuations. 
While this is the case in dimension two, it fails for simple reasons in higher dimensions.
For instance, it is clear that there are no terminal valuations over
a variety with terminal singularities, or over a variety which admits a small resolution. 
On the other hand, there are always Nash valuations over any singular variety. 

More examples showing that not all Nash valuations are terminal valuations can be constructed 
using toric geometry. In \S\ref{s:toric}, we give a toric description
of terminal valuations over a toric variety and compare it with
the description of Nash valuations given in \cite{IK03}.

In conclusion, Theorem~\ref{t:main} should be viewed as complementing
the fact that Nash valuations are essential. We have inclusions
\[
\{\text{terminal valuations}\} \subset 
\{\text{Nash valuations}\} \subset 
\{\text{essential valuations}\},
\]
but either inclusion can be strict.

\subsection{Outline of the proof}
\label{s:outline}

We were led to consider minimal models (and define, accordingly, terminal valuations)
as a result of our attempt to understand, from
an algebro-geometric standpoint, some of the topological computations carried
out in \cite{FdBPP12}.
The idea of looking at divisors on minimal models in connection to the Nash problem
was also suggested by Fern\'andez de Bobadilla.

For simplicity, let us focus on the two-dimensional case, as 
the main ideas of the proof are already there.
So, let $X$ be a surface. Let $f \colon Y \to X$ be the minimal resolution, 
and let $f_\infty \colon Y_\infty \to X_\infty$ be the map induced 
on arc spaces.
For every irreducible component $E$ of the exceptional locus $\Ex(f)$, 
let $E^\o \subset E$ denote 
the largest open set disjoint from the other irreducible components of $\Ex(f)$.
Then let $N_E^\o := f_\infty(\p_Y^{-1}(E^\o))$
where $\p_Y \colon Y_\infty \to Y$ is the natural projection, and let
$N_E \subset X_\infty$ be the closure of $N_E^\o$. 

Assuming that the Nash problem fails for $X$, one can find two irreducible components
$E$ and $F$ of $\Ex(f)$ such that $N_E \subsetneq N_F$.
The idea, due to Lejeune-Jalabert~\cite{LJ80}, 
is to detect this by producing a morphism $\Phi \colon \Spec K[[s]] \to X_\infty$, 
for some field extension $K/k$, such that 
\[
\Phi(0) \in N_E^\o
\and
\Phi(\e) \in N_F \smallsetminus N_E.
\]
Here $0$ is the closed point of $\Spec K[[s]]$, and $\e$ is its generic point.
Such a map is called a \emph{wedge}.
The existence of the wedge $\Phi$ is a delicate issue, and is established in \cite{Reg06}.
Note that $\Phi$ does not factor through $Y_\infty$.

Let us assume that $K = k$. 
The wedge $\Phi$ can be regarded as a morphism 
\[
\Phi \colon S = \Spec k[[s,t]] \to X
\]
that does not lift to $Y$.

At this point, the approach in \cite{FdBPP12} is roughly the following. 
Reducing to work over $\C$, 
and using a suitable approximation theorem, one can assume that $\Phi$ is locally given 
by power series with positive radius of convergence, and thus assume without loss of generality that
$S \subset \C^2$ is the product of two open disks of radius 1
\cite{FdB12}. The curves $(s=\l)\subset S$, 
for $0 < |\l| < 1$, lift to $Y$ and degenerate, as $\l \to 0$, to a cycle 
$\sum a_i E_i + T'$ that
is supported on the union of the exceptional locus $\Ex(f)$ of $f$ and the image $T'$
of the $t$-axis $(s=0) \subset S$. 
The contradiction is then reached by computing the Euler
characteristic of these curves in two ways, as images of small disks, 
and as they degenerate inside a small tubular neighborhood of $\Ex(f) \cup T'$.
The contradiction, resulting from the computation, stems from 
the fact $\Ex(f)$ does not contain any rational curve with self-intersection $-1$.

In order to translate this into algebro-geometric language, we 
take a resolution of indeterminacy of the rational map $f^{-1} \o \Phi \colon S \rat Y$. 
We construct the resolution by taking a minimal sequence of blow-ups of maximal ideals. 
This gives us a diagram
\[
\xymatrix{
Z \ar[r]^\f \ar[d]_g & Y \ar[d]^f \\
S \ar[r]^\Phi & X
}
\]
where $Z$ is a regular two-dimensional scheme. 
Then we shift the computation from $Y$ to $Z$. 

This reduction has several advantages.
First, it allows us to bypass the use of approximation theorems and to work 
directly in the formal setting.
Furthermore, we avoid having to deal with the singularities of $\Ex(f)$ and can
work in fact with partial resolutions $Y \to X$. 
Finally, working on $Z$ allows us to extend the computation to all higher dimensions, by taking 
wedges defined over suitable field extensions $K/k$.

The proof relies on the analysis of the ramification of the map $\f$
at the generic point of the component $G$ of $\Ex(g)$ intersecting 
the proper transform $T$ of the $t$-axis $(s=0) \subset S$.

We consider the contraction $h \colon Z \to Z'$ of all the irreducible components
of $\Ex(g)$ that are contracted by $\f$. The map $\f$ factors through $h$ and a
morphism $\f' \colon Z' \to Y$. We look at the relative canonical divisor $K_{Z'/Y}$
of $\f'$, which we decompose as $K_{Z'/Y} = K_{Z'/Y}^\ver + K_{Z'/Y}^\hor$
by separating those components that exceptional over $S$ from those that are not. 
By a local computation in codimension one (like in H\"urwitz formula),
we check that 
\[
\ord_G(h^*K_{Z'/Y}^\ver) < \ord_G(\f^*E).
\]
On the other hand, a negativity lemma (essentially the Hodge index theorem) implies that
\[
\ord_G(h^*K_{Z'/Y}^\ver) \ge \ord_G(K_{Z/S}). 
\]
This is the step where we use the assumption that $Y$ is a minimal model.
To conclude, we just observe that $\ord_G(K_{Z/S}) \ge 1$ since $S$ is smooth, and
$\ord_G(\f^*E)=1$ since $T$ maps to an arc on $Y$ with order of contact one along $E$. 
This gives the contradiction we wanted.

\begin{rmk}
The computation at the core of 
the proof becomes particularly transparent if $Z' = Z$ (and $Y$ is smooth). 
In this special case, $K_{Z/Y}$ is the effective divisor
defined by the vanishing of the Jacobian of $\f$, and its coefficient at $G$ is zero 
because otherwise the special arc $\Phi(0)$ of the wedge could not meet
transversally $E$ in $Y$. The Jacobian of $\f$ gives a homomorphism 
from $\O_Z(\f^*K_Y)$ to $\O_Z(K_Z)$, and hence we have the linear 
equivalence $K_{Z/Y} \sim K_Z - \f^*K_Y$. Note, on the other hand, that
$K_Z$ is linear equivalent to the exceptional divisor $K_{Z/S}$, 
which is defined by the Jacobian of $g$. Combining these linear equivalences, we obtain the 
equivalence $K_{Z/S} - K_{Z/Y}^\ver \sim \f^*K_Y + K_{Z/Y}^\hor$.
The divisor in the right hand side is $g$-nef because $K_{Z/Y}^\hor$ is horizontal and
effective and $K_Y$ is $f$-nef, and the divisor in the left hand side is $g$-exceptional.
It follows by the negative definitness of the intersection form
of the exceptional divisors of $Z$ that $K_{Z/S} - K_{Z/Y}^\ver$ is anti-effective. 
This is impossible, however, since the coefficient of $G$ in $K_{Z/S}$ is positive 
and its coefficient in $K_{Z/Y}$ is zero.
\end{rmk}

\begin{rmk}
It would be interesting to try to extend the proof to  
positive characteristics. There are some difficulties to overcome. First of all, 
the proof of Theorem~\ref{t:main} relies on Reguera's curve selection lemma
which requires, in the construction of the wedge,
to take a field extension which may in principle be inseparable. 
This creates problems for the definition of $K_{Z'/Y}$.
Notice that this issue does not occur in dimension two, since
in this case we are led to work with wedges defined over finite algebraic extensions of $k$,
and if $k$ is algebraically closed then there is only the trivial one.
However, one runs into another difficulty: the local computation leading to the
first inequality displayed above breaks down 
if $\f'$ is wildly ramified. In order to extend the result of this paper
to positive characteristics, it seems that one would need to gain control on the
construction of the wedge to avoid these problems.
\end{rmk}

\subsection{Acknowledgments}
We are very grateful to Javier Fern\'andez de Bobadilla for many conversations
and for bringing to our attention an error in an earlier version of this paper. 
We thank Lawrence Ein and Shihoko Ishii for useful discussions, and the referees
for their careful reading of the paper and their relevant comments.

\section{Notation and conventions}

We work over an uncountable algebraically closed field $k$ of characteristic zero.
All schemes are separated and defined over $k$. 
A \emph{point} of a scheme is a schematic point; more generally, we will also consider
$A$-valued points on schemes, where $A$ is any $k$-algebra.
We denote by $\kappa_p$ the residue field of a point $p$. 
A point of an irreducible scheme $X$ is \emph{very general} if it can be
chosen arbitrarily in the complement of a countable union of proper closed subsets.
If $X$ is a variety, we denote by $X_\sm$ the smooth locus of $X$, and 
set $X_\sing = X \smallsetminus X_\sm$. 
The \emph{exceptional locus} of a proper birational morphism of schemes $f \colon Y \to X$ 
is the smallest set $\Ex(f) \subset Y$ away from which $f$ is 
an isomorphism to its image. A \emph{divisorial valuation} on a variety $X$
is a valuation of the form $\n = \ord_E$ where $E$ is a 
prime divisor on a resolution of singularities $Y \to X$.

\section{The Nash problem}

In this section, we quickly recall basic definitions and properties related to arc spaces
and the Nash map.
For a full treatment on arc spaces, we refer the reader to~\cite{DL99,EM09}.
For a more comprehensive introduction to the Nash problem, we refer to 
\cite{Nas95,IK03}.

\subsection{Arc space}

For any scheme $X$ over $k$ and any field extension $K/k$,
a \emph{$K$-valued arc} of $X$ is a morphism $\g \colon \Spec K[[t]] \to X$. 
We denote the image of the closed point by $\g(0)$ and 
the image of the generic point by $\g(\e)$. 

Let now $X$ be a scheme of finite type over $k$.   
For every $m \in \N$, the functor that to any $k$-algebra
$A$ associates $\Hom_{\text{$k$-Sch}}(\Spec A[t]/(t^{m+1}), X)$
is representable by a scheme of finite type over $k$, denoted $X_m$ and
called the \emph{$m$-th jet scheme} of $X$. For $m > n$, the natural
quotients $A[t]/(t^{m+1}) \to A[t]/(t^{n+1})$ induce morphisms $X_m \to X_n$, 
known as the \emph{truncation maps}. The truncation
maps are affine, and can be used to define a scheme $X_\infty = \liminv\,
X_m$, called the \emph{arc space} of $X$. 
The set of $K$-valued points of $X_\infty$ is 
\[
X_\infty(K) = \Hom_{\text{$k$-Sch}}(\Spec K[[t]], X),
\]
the set of $K$-valued arcs of $X$. 
The Zariski topology of $X_\infty$ coincides with the inverse limit topology.

The arc space comes equipped with natural projections $X_\infty \to X_m$. 
When $m=0$, this gives a morphism $\pi_X \colon X_\infty \to X$,
mapping an arc $\g \in X_\infty$ to $\g(0) \in X$.
If $\g$ is any $K$-valued arc, then we regard $\p_X(\g)$ as the $K$-valued point of $X$
given by pre-composition with the inclusion $\Spec K \inj \Spec K[[t]]$.

Given a morphism $f\colon Y \to X$ of $k$-schemes of finite type, we obtain an induced
morphism between the respective arc spaces $f_\infty\colon Y_\infty \to X_\infty$.
At the level of the functors of points, $f_\infty$ is just given by
composition with $f$, mapping a $K$-valued arc $\g \colon \Spec K[[t]] \to Y$
to the $K$-valued arc $f \o \g \colon \Spec K[[t]] \to X$.

Every $K$-valued arc $\a \colon \Spec K[[t]] \to X$ defines a valuation on the local ring
$\O_{X,\p_X(\a)}$, given by $\ord_\a(h) := \ord_t(\a^\sharp(h))$. The
valuation extends to a valuation of the function field of $X$ if and only
if $K((t))$ is a field extension of the function field of $X$, that is, if and only if
the image of $\a$ is dense in $X$.

\subsection{Nash valuations and the Nash map}

A divisorial valuation $\n$ is an \emph{essential valuation} over $X$ if 
the center of $\n$ on any resolution 
of $X$ is an irreducible component of the inverse image of $X_\sing$, 
and is a \emph{Nash valuation} over $X$ if it is the
valuation defined by the generic point of an irreducible component
of $\p_X^{-1}(X_\sing)$.

The decomposition of $\p_X^{-1}(X_\sing)$ into irreducible components, 
and the way these components define divisorial valuations over $X$,  
can be understood using resolutions of singularities.
This is well explained, for instance, in \cite{IK03}.
We briefly outline the argument. 

Let $f \colon Y \to X$ be any resolution of singularities, and
let 
\[
f^{-1}(X_\sing) = \bigcup_{i \in I} C_i
\]
be the decomposition of the inverse image of the singular locus into irreducible components. 
Notice that $I$ is a finite set. 
Let $Y_i \to Y$ be the normalized blow-up of $C_i$, and let $E_i$
be the prime divisor on $Y_i$ dominating $C_i$.

Since $Y$ is smooth, all truncation maps $Y_m \to Y_n$ are 
smooth, and this implies that each $\p_Y^{-1}(C_i) \subset Y_\infty$ is an irreducible set.
Then, denoting by $N_i$ the closure of $f_\infty(\p_Y^{-1}(C_i))$ in $X_\infty$, 
each $N_i$ is irreducible, and there is a unique minimal subset $I' \subset I$ such that
\[
\p_X^{-1}(X_\sing) = \bigcup_{i \in I'}N_i.
\]
For every $i \in I'$, the generic point $\a_i$ of $N_i$ is the image of the generic point
of $\p_{Y_i}^{-1}(E_i)$, and this implies that $\ord_{\a_i} = \ord_{E_i}$.
In particular, each valuation $\ord_{\a_i}$ is a divisorial valuation on $X$. 

Given the above resolution $f$, 
the center in $Y$ of any essential valuation over $X$ must be one of the irreducible components $C_i$. 
This means that there is a subset $I'' \subset I$ such that a 
divisorial valuation $\n$ over $X$ is essential if and only if $\n = \ord_{E_i}$
for some $i \in I''$. 
The decomposition of $\p_X^{-1}(X_\sing)$ into irreducible components
only depends on the topology of $X_\infty$, and not on the choice of resolution. 
This implies that $I' \subset I''$. In terms of valuations, 
it means that there is an inclusion
\[
\{\text{Nash valuations}\} \inj 
\{\text{essential valuations}\}.
\]
This is the \emph{Nash map}. 
The original formulation of the \emph{Nash problem} asked whether this map is surjective.

\subsection{Wedges}

Let $X$ be a scheme of finite type over $k$. For any field extension $K/k$,
a (parametrized) $K$-valued \emph{wedge} on $X$ is a morphism 
\[
\Phi \colon \Spec K[[s,t]] \to X.
\]

Since $K[[s,t]] = K[[s]][[t]]$, giving a $K$-valued wedge $\Phi \colon
\Spec K[[s,t]] \to X$ is equivalent to giving a $K[[s]]$-valued arc (which
we still denote by the same letter)
\[
\Phi \colon \Spec K[[s]] \to X_\infty.
\]
Restricting to the closed and generic points of $\Spec K[[s]]$, we obtain arcs
\[
\Phi_0 \colon \Spec K \to X_\infty
\and
\Phi_\e \colon \Spec K((s)) \to X_\infty.
\]
We can think of $\Phi_0$ as being a specialization of $\Phi_\e$. We call
$\Phi_0$ the \emph{special arc} of the wedge, and $\Phi_\e$ the
\emph{generic arc} of the wedge.

\begin{rmk}
It is important to note that in this discussion we are using the explicit
choice of coordinates $(s,t)$ on $\Spec K[[s,t]]$. When talking about
wedges, we implicitly assume that such a choice of coordinates has been
made. 
\end{rmk}

Let $f \colon Y \to X$ be a projective birational morphism of algebraic
varieties over $k$, and let
\[
\Phi \colon S = \Spec K[[s,t]] \to X 
\]
be a wedge. We say that $\Phi$ \emph{lifts} to $Y$ if it factors through $f$. 
Thinking of $\Phi$ as an arc on $X_\infty$, we also say that $\Phi$ lifts
to $Y_\infty$.

If both the special arc $\Phi_0$ and the generic arc $\Phi_\e$ are not
contained in the indeterminacies of $f^{-1}$, then, by the valuative criterion of
properness, both arcs lift (uniquely) to arcs on $Y$:
\[
\xymatrix{
& Y \ar[d]^f &&& Y \ar[d]^f \\
\Spec K[[t]] \ar[r]^(.65){\Phi_0} \ar[ur]^{\~\Phi_0} & X, && 
\Spec K((s))[[t]] \ar[r]^(.68){\Phi_\e} \ar[ur]^{\~\Phi_\e} & X.
}
\]
However, in general $\~\Phi_0$ will not be a specialization of $\~\Phi_\e$.
In fact, $\~\Phi_\e$, viewed as a $K((s))$-valued point on $Y_\infty$,
might fail to extend to an arc $\Spec K[[s]] \to Y_\infty$. This is related
to whether $\Phi$ lifts to $Y$ or not. The following result follows
immediately from the definitions.

\begin{lem}
\label{l:lift-aux}
With above terminology, the following are equivalent:
\begin{enumerate}
\item $\Phi$ lifts to $Y$,
\item the morphism $\~\Phi_\e \colon \Spec K((s))[[t]]
\to Y$ factors through $\Spec K[[s,t]]$,
\item the morphism $\~\Phi_\e \colon \Spec K((s)) \to
Y_\infty$ extends to an arc $\~\Phi_\e \colon \Spec K[[s]] \to Y_\infty$.
\end{enumerate}
\end{lem}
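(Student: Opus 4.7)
The plan is to handle the two equivalences (2)$\Leftrightarrow$(3) and (1)$\Leftrightarrow$(2) separately, since the lemma is essentially a formal unwinding of the universal properties involved.

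For the equivalence (2)$\Leftrightarrow$(3), the key point is the universal property of the arc space. A $K[[s]]$-valued point of $Y_\infty$, viewed via $Y_\infty = \invlim Y_m$, is the same data as a compatible family of morphisms $\Spec K[[s]][t]/(t^{m+1}) \to Y$; since $Y$ is of finite type, any homomorphism from its coordinate ring into $K[[s,t]] = K[[s]][[t]]$ is determined by its truncations modulo powers of $t$, so such a compatible family is the same as a morphism $\Spec K[[s,t]] \to Y$. Similarly, a $K((s))$-valued arc on $Y$ corresponds to a morphism $\Spec K((s))[[t]] \to Y$. Under these correspondences, asking that the $K((s))$-valued arc $\~\Phi_\e$ extend to a $K[[s]]$-valued arc is the same as asking that the morphism $\Spec K((s))[[t]] \to Y$ factor through the canonical inclusion $\Spec K((s))[[t]] \inj \Spec K[[s,t]]$.

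For (1)$\Rightarrow$(2), suppose $\Phi$ lifts to $\~\Phi \colon \Spec K[[s,t]] \to Y$. Restricting $\~\Phi$ along $\Spec K((s))[[t]] \inj \Spec K[[s,t]]$ produces a lift of $\Phi_\e$ to $Y$. Since $\Phi_\e$ is not contained in the indeterminacies of $f^{-1}$, the valuative criterion of properness applied to $f$ guarantees uniqueness of such a lift, so this restriction agrees with $\~\Phi_\e$. Hence $\~\Phi_\e$ factors through $\Spec K[[s,t]]$ via $\~\Phi$, giving (2).

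For (2)$\Rightarrow$(1), assume $\~\Phi_\e = \~\Phi \o \iota$ for some $\~\Phi \colon \Spec K[[s,t]] \to Y$, where $\iota \colon \Spec K((s))[[t]] \to \Spec K[[s,t]]$ is the canonical inclusion. The two morphisms $\Phi, f\o\~\Phi \colon \Spec K[[s,t]] \to X$ have the same restriction along $\iota$, because $f \o \~\Phi_\e = \Phi_\e$. Since the ring map $K[[s,t]] \inj K((s))[[t]]$ is injective and $X$ is separated, the equalizer of two such morphisms is a closed subscheme containing the image of $\iota$; but the image of $\iota$ corresponds to the localization at $s$ and is schematically dense, so $f\o\~\Phi = \Phi$, proving (1). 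No single step is a real obstacle; the only subtle point is being careful with the universal property of the arc space when the test ring $K[[s]]$ is not of finite type, which is handled by the $t$-adic completeness argument above.
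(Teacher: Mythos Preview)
Your proof is correct and is exactly the kind of unwinding of definitions that the paper has in mind; the paper itself gives no argument beyond the sentence ``This result follows immediately from the definitions.'' One small imprecision: the image of $\iota$ is not literally the localization at $s$ (since $K((s))[[t]] \ne K[[s,t]][1/s]$), but your actual argument only uses that $K[[s,t]] \inj K((s))[[t]]$ is injective, which forces the ideal defining the equalizer to be zero, and that is fine.
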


\subsection{The curve selection lemma}

The idea outlined in \S\ref{s:outline} to look at the Nash problem as a lifting problem for wedges 
was first considered in~\cite{LJ80}, where the notion
of wedges was introduced in the context of surface singularities.
This approach is not specific of dimension two, and provides a natural
way to address the Nash problem in all dimensions. 

Given two closed irreducible sets $M$ and $N$ in $X_\infty$, with $N \subsetneq M$,
the existence of a morphism $\Phi \colon \Spec K[[s]] \to X_\infty$ 
such that $\Phi(0) \in N$ and $\Phi(\e) \in M \smallsetminus N$
should be viewed as an algebro-geometric analogue of Milnor's curve selection lemma. 

In general, the curve selection lemma holds, in the algebro-geometric sense, for Noetherian schemes, 
where it can be proven by cutting down and using 
induction on dimension. However, $X_\infty$ is not a Noetherian
scheme, and there are examples where the curve selection lemma fails in the non-Noetherian setting
(e.g., see \cite[Example~4]{FdBPP}). 
It is therefore a delicate issue to
establish the existence of a wedge $\Phi$ with the above properties.

The first general result on the curve selection lemma for arc spaces is due to Reguera \cite{Reg06}. 
In the proof of Theorem~\ref{t:main} we will use
the following application of Reguera's result.
We first introduce some notation.

Let $f \colon Y \to X$ be a projective birational morphism from a normal variety $Y$, 
and let $E$ be a prime divisor on $Y$ contained in $f^{-1}(X_\sing)$. 
Let $\~\a \in Y_\infty$ be the generic point of the irreducible component
of $\p_Y^{-1}(E)$ dominating $E$, and let $\a = f_\infty(\~\a)$.
Note that $\a \in \p_X^{-1}(X_\sing)$. The closure of $\a$ is what was
denoted by $N_E$ in \S\ref{s:outline}. 
Recall that a point of a scheme is said to be very general if 
it is in the complement of a countable union of preassigned proper closed subsets. 

\begin{thm}
\label{t:csl}
With the above notation, suppose that $\a$ is not the generic point of an
irreducible component of $\p_X^{-1}(X_\sing)$. Then for a very general
point $p \in E$ with residue field $\kappa_p$, there is a finite algebraic
extension $K/\k_p$, and a wedge
\[
\Phi \colon  K[[s,t]] \to X
\]
that does not lift to $Y$, such that the lift $\~\Phi_0 \colon \Spec K[[t]]
\to Y$ of the special arc $\Phi_0$ has order of contact one with $E$ at
$p$. 
\end{thm}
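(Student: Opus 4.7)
The plan is to produce $\Phi$ via Reguera's curve selection lemma for arc spaces~\cite{Reg06}, and then to check non-lifting by a case analysis on the possible locations of a hypothetical lift $\~\Phi_\e$ in $Y_\infty$.

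By hypothesis there is an irreducible component $N$ of $\p_X^{-1}(X_\sing)$ with $\ov{\{\a\}} \subsetneq N$. I would then pick a very general point $p \in E$---general enough that $Y$ is smooth at $p$, that $p$ avoids every other irreducible component of $f^{-1}(X_\sing)$, and that $p$ avoids countably many auxiliary proper subsets of $E$ that appear below---and take a generic arc $\~\b$ on $Y$ with $\~\b(0) = p$ and order of contact one with $E$ at $p$. By the genericity of the choice, $\~\b \in \ov{\{\~\a\}}$, and the push-forward $\b := f_\infty(\~\b)$ is a stable (fat) point of $X_\infty$ with $\b \in \ov{\{\a\}} \subsetneq N$, whose residue field is a finite algebraic extension of $\k_p$.

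Applying Reguera's curve selection lemma to $\b$ and $N$, one obtains a finite algebraic extension $K/\k(\b)$---hence of $\k_p$---and a morphism $\Phi \colon \Spec K[[s]] \to X_\infty$, equivalently a wedge $\Phi \colon \Spec K[[s,t]] \to X$, with $\Phi_0 = \b$ and $\Phi_\e$ a fat point of $X_\infty$ lying in $N \smallsetminus \ov{\{\a\}}$; the fatness of $\Phi_\e$ can be arranged since the generic point of $N$ is itself fat. The lift $\~\Phi_0$ is then $\~\b$, and in particular has the required order of contact one with $E$ at $p$.

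It remains to check that $\Phi$ does not lift to $Y$. By Lemma~\ref{l:lift-aux}, it suffices to show that $\~\Phi_\e$ does not extend to a $K[[s]]$-point of $Y_\infty$. If such an extension existed, the specialization would force $\~\b = \~\Phi_0 \in \ov{\{\~\Phi_\e\}}$. Since $\Phi_\e$, and hence $\~\Phi_\e$, is fat, $\~\Phi_\e$ must either lie in the component $\ov{\{\~\a_j\}}$ of $\p_Y^{-1}(f^{-1}(X_\sing))$ dominating some prime divisor $E_j \subset f^{-1}(X_\sing)$, or else have center off $f^{-1}(X_\sing)$. If $E_j = E$, then $\~\Phi_\e \in \ov{\{\~\a\}}$, giving $\Phi_\e \in \ov{\{\a\}}$, a contradiction; if $E_j \neq E$, the closedness of $\p_Y^{-1}(E_j)$ forces $\~\b \in \p_Y^{-1}(E_j)$, hence $p \in E_j$, contradicting the choice of $p$; and if $\~\Phi_\e$ has center off $f^{-1}(X_\sing)$, then $\Phi_\e \notin \p_X^{-1}(X_\sing)$, contradicting $\Phi_\e \in N$. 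The main obstacle is Reguera's lemma itself, which hinges on the Noetherianness of the local ring of $X_\infty$ at a stable point; arranging that $K$ can be taken as a finite algebraic extension of $\k_p$ and that $\Phi_\e$ is fat are subsidiary points that must be verified by carefully tracing through Reguera's construction.
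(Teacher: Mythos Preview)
Your approach diverges from the paper's in a crucial way, and the divergence hides a real gap. The paper applies Reguera's curve selection lemma at the \emph{generic point} $\a$ of $N_E$, where stability is clear, obtaining a wedge $\Psi$ over a finite extension of $\k_\a$; it then runs a separate specialization argument (the content of Lemmas~\ref{l:specialize-csl-aux1}--\ref{l:specialize-csl-aux2} and Theorem~\ref{t:specialize-csl}) to restrict $\Psi$ to a wedge $\Phi$ over a finite extension of $\k_\b = \k_p$, checking that non-lifting is preserved under restriction. You instead try to apply Reguera directly at $\b$.

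The problem is that Reguera's lemma, as stated, applies at the generic point of a generically stable subset of $X_\infty$, and your $\b$ is not such a point. There is a tension you cannot resolve: if by ``generic arc $\~\b$ through $p$'' you mean the generic point of $\p_Y^{-1}(p)$, then $\ov{\{\b\}}$ may well be generically stable, but $\k_\b$ has infinite transcendence degree over $\k_p$, so the extension $K/\k_p$ produced by Reguera cannot be finite algebraic. If instead $\~\b$ is a very general $\k_p$-valued arc (which is what you need for the residue-field claim), then $\ov{\{\b\}}$ is cut out by the infinitely many algebraic relations among the coefficients of $\b$ over $k$, and there is no reason for it to be generically stable; in particular, the Noetherianness of $\^{\O_{X_\infty,\b}}$ on which Reguera's argument rests is not available. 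This is not a matter of tracing through Reguera's construction more carefully; it is precisely the obstacle that the paper's specialization step is designed to circumvent. Your non-lifting case analysis at the end is essentially fine (modulo the minor point that components of $f^{-1}(X_\sing)$ need not be divisors), but the wedge $\Phi$ it is meant to apply to has not actually been produced.
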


This result follows from \cite[Corollary~4.6]{Reg06}
by a specialization argument along the lines of \cite[\S2.2]{LJR12} or \cite[\S3.2]{FdB12}. 
We explain the argument in details in the Appendix.

\section{Geometry on a resolution of a wedge}

Throughout this section, 
let $f \colon Y \to X$ be a projective birational morphism of varieties over $k$, and
consider a wedge
\[
\Phi \colon S = \Spec K[[s,t]] \to X
\]
where $K/k$ is a field extension.
We assume that $\Phi$ does not lift to $Y$, and
that the image of $\Phi$ in $X$ is not fully 
contained in the indeterminacy locus of $f^{-1}$.

\subsection{Resolution of the wedge}
\label{s:intersection}

We can identify $f$ with the blow-up of an ideal sheaf $\I \subset \O_X$ 
that does not vanish on the image of $\Phi$. 
Consider then the inverse image $\J = \Phi^{-1}\I \. \O_S$. We 
write $\J = \fa\.(h)$ where $h \in K[[s,t]]$ and $\fa$ is an $(s,t)$-primary
ideal. 

\begin{prop}
\label{p:h}
There is a commutative diagram
\[
\xymatrix{
Z \ar@/_1pc/[rdd]_g \ar[rd]^h\ar@/^1pc/[rrd]^\f &&\\
& Z' \ar[d]_{g'}\ar[r]^{\f'} & Y \ar[d]^f \\
& S \ar[r]^\Phi & X
}
\]
where $g'$ is the blow-up of the integral closure $\ov\fa$ of $\fa$, 
and $g$ is a minimal sequence of blow-ups of maximal ideals such that $h$ is a morphism. 
Both $Z$ and $Z'$ are two-dimensional and projective over $S$, $Z$ is regular, and $Z'$ is normal
and $\Q$-factorial. 
The exceptional loci $\Ex(g)$ and $\Ex(g')$ are non-empty sets of pure codimension one,
and their irreducible components are projective curves.
\end{prop}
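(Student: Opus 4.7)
The plan is to use the failure of $\Phi$ to lift to $Y$ in order to extract a non-invertible factor from $\Phi^{-1}\I\.\O_S$, and then to build $Z'$ and $Z$ as suitable blow-ups resolving that factor.

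First I would identify $f\colon Y\to X$ with the blow-up of an ideal sheaf $\I\subset\O_X$; the hypothesis that the image of $\Phi$ is not contained in the indeterminacy locus of $f^{-1}$ ensures that $\J := \Phi^{-1}\I\.\O_S$ is a non-zero ideal of $K[[s,t]]$. Since $K[[s,t]]$ is a two-dimensional regular local UFD, I can factor $\J = (h)\.\fa$ with $h$ a greatest common divisor of generators of $\J$ and $\fa$ of pure codimension at least two, i.e., either $(s,t)$-primary or the unit ideal. The case $\fa=(1)$ is excluded: then $\J=(h)$ would be invertible and the universal property of the blow-up $f$ would lift $\Phi$ to $Y$, contradicting the hypothesis. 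So $\fa$ is $(s,t)$-primary.

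Next I would define $g'\colon Z'\to S$ as the blow-up of $\ov\fa$. Zariski's product theorem for integrally closed ideals in a two-dimensional regular local ring gives $\ov{\fa^n}=\ov\fa^{\,n}$, so $Z'=\Proj\bigoplus_n \ov\fa^{\,n}$ coincides with the normalized blow-up of $\fa$. In particular $Z'$ is normal, and $\fa\.\O_{Z'}$ (being the pullback of the invertible ideal $\fa\.\O_{\Bl_\fa S}$ under the normalization map) is invertible. By Lipman's theorem, the normalized blow-up of an $\fm$-primary ideal in a regular two-dimensional local ring has only rational singularities, and rational surface singularities are $\Q$-factorial, so $Z'$ is $\Q$-factorial.

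To construct $g\colon Z\to S$, I would invoke standard embedded resolution of an $\fm$-primary ideal on a regular surface: there is a unique minimal sequence of blow-ups of closed points after which $\fa\.\O_Z$ becomes invertible, and on such a $Z$, which is regular, the induced $\ov\fa\.\O_Z$ is also invertible. By the universal property of $Z'=\Bl_{\ov\fa} S$, this sequence is exactly the minimal one for which the rational map $Z\rat Z'$ extends to a morphism $h\colon Z\to Z'$. Assembling everything, $\Phi^{-1}\I\.\O_{Z'}=(\fa\.\O_{Z'})\.(h)$ is invertible, so the universal property of $Y=\Bl_\I X$ furnishes $\f'\colon Z'\to Y$, and $\f := \f'\o h\colon Z\to Y$.

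Finally, both $g$ and $g'$ are projective birational (as blow-ups of coherent ideals on an affine base, or a finite composition thereof), and are isomorphisms away from the unique closed point of $S$ because $\fa$ is $\fm$-primary; their closed fibers there are non-empty (because $\fa\ne(1)$) and must be of pure codimension one in the two-dimensional schemes $Z$ and $Z'$, with irreducible components that are projective curves. The main technical content I expect to require real work is the identification of $\Bl_{\ov\fa} S$ with the normalized blow-up of $\fa$ and the invocation of Lipman's rationality theorem to secure $\Q$-factoriality; the remaining assertions reduce to standard universal-property arguments for blow-ups.
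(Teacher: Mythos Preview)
Your approach is essentially the same as the paper's: identify $f$ with a blow-up, factor $\J=(h)\cdot\fa$, build $Z'$ as the blow-up of $\ov\fa$ (equivalently, the normalized blow-up of $\fa$), resolve the indeterminacies of $S\dashrightarrow Z'$ by point blow-ups to get $Z$, and invoke Lipman for rationality and $\Q$-factoriality. The one organizational difference is that the paper first reduces to the polynomial ring $S^\o=\Spec K[s,t]$, constructs $(Z')^\o$ and $Z^\o$ there, and then base-changes along $S\to S^\o$; this is not needed for the proposition itself, but the models $Z^\o$, $(Z')^\o$ are reused later in the paper (for special differentials and for the proof that $K_{Z/Z'}\le 0$ via passage to $\ov K$), so you would have to revisit those arguments.

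There is one genuine slip. You construct $g$ as the minimal sequence of point blow-ups after which $\fa\cdot\O_Z$ becomes invertible, and then assert that ``this sequence is exactly the minimal one for which $Z\dashrightarrow Z'$ extends to a morphism.'' That implication only goes one way: principalizing $\fa$ certainly principalizes $\ov\fa$, but the converse can fail, so your sequence may overshoot. For instance, with $\fa=(x^2,xy,y^3)$ one has $\ov\fa=(x,y)^2$; a single blow-up of the origin already principalizes $\ov\fa$, while $\fa$ needs a further blow-up in one chart. The fix is trivial---define $g$ directly as the minimal sequence principalizing $\ov\fa$ (equivalently, resolving $S\dashrightarrow Z'$), which is what the statement asks for and what the paper does. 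This matters downstream: the minimality of $g$ relative to $h$ is exactly what guarantees there are no $(-1)$-curves among the $h$-exceptional components, which is the input to the proof that $K_{Z/Z'}\le 0$.
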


\begin{proof}
The morphism $\f'$ exists by the universal property of the blow-up. 

Being an $(s,t)$-primary ideal, $\ov\fa$ is generated by polynomials, 
that is, $\ov\fa = \ov\fa^\o\.K[[s,t]]$
for an $(s,t)$-primary ideal $\ov\fa^\o \subset K[s,t]$. 
Note that $\ov\fa^\o = \ov\fa \cap K[s,t]$ is integrally closed. 
If $S^\o=\Spec K[s,t]$ and $(Z')^\o \to S^\o$ is the blow-up of $\ov\fa^\o$, then
we have that $Z'= (Z')^\o\times_{S^\o} S$. In particular, $Z'$ is normal and
projective over $S$. 

The indeterminacies of $S^\o \rat (Z')^\o$ can be resolved 
by a minimal sequence $Z^\o \to S^\o$ of blow-ups of maximal ideals. All centers of blow-up are
closed points, and thus they lie over the closed point of $S^\o$. 
We let $Z = Z^\o \times_{S^\o} S$. By base change, 
we obtain a morphism $g \colon Z \to S$, resolving the indeterminacies
of $S \rat Z'$, given by a composition of blow-ups of maximal ideals. 
We let $h$ and $\f$ be the induced morphisms.

To check that $Z'$ is $\Q$-factorial, we apply a result of Lipman. 
Since $S$ has rational singularities, we have $H^1(Z,\O_Z) = 0$
by \cite[Proposition~(1.2)]{Lip69}.
Using \cite[Lemma~(12.2)]{Lip69}, we see that this is 
implies that $H^1(h^{-1}(U),\O_Z) = 0$ for every open set $U \subset Z'$,
and therefore $Z'$ has rational singularities. 
Then \cite[Proposition~(17.1)]{Lip69} implies that $Z'$ is $\Q$-factorial.

By construction, all irreducible components of $\Ex(g)$ and $\Ex(g')$ are projective.
These sets are non-empty because $f^{-1}\o\Phi \colon S \rat Y$ is not a morphism.
\end{proof}

We say that an irreducible component $C$ of $\Ex(g)$ 
is \emph{contracted} by $\f$ if $\f$ maps every point of $C$
to the same point of $Y$. A similar definition is given for $\f'$ and the components of $\Ex(g')$. 

\begin{lem}
\label{l:contracted}
For an irreducible component $C$ of $\Ex(g)$, the following are equivalent:
\begin{enumerate}
\item
$C$ is a component of $\Ex(h)$;
\item
$C$ is contracted by $\f$; 
\item
there is a closed point $q \in C$ such that if 
$c \in C$ is the generic point, then $\f(q) = \f(c)$.
\end{enumerate}
\end{lem}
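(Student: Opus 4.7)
The plan is to establish the cyclic implications (a) $\Rightarrow$ (b) $\Rightarrow$ (c) $\Rightarrow$ (a). The first implication is the standard one: since $h \colon Z \to Z'$ is a proper birational morphism of two-dimensional normal Noetherian schemes (with $Z'$ even $\Q$-factorial, by Proposition~\ref{p:h}), its exceptional locus $\Ex(h)$ has pure codimension one and each irreducible component is contracted by $h$ to a point. Hence $h(C)$ is a single point, and composing with $\f'$ gives that $\f(C) = \f'(h(C))$ is a single point of $Y$. The implication (b) $\Rightarrow$ (c) is immediate, since a non-empty projective curve $C$ has closed points, and if $\f|_C$ is constant then $\f(q) = \f(c)$ for every $q \in C$.

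The substantive content lies in (c) $\Rightarrow$ (a), which I prove by contraposition. Assume that $C$ is not a component of $\Ex(h)$. Since $\Ex(h)$ has pure codimension one, $h|_C$ is a proper birational morphism onto its image $C' := h(C)$, an irreducible curve in $Z'$. From the factorization $g = g' \circ h$ and the fact that $g(C)$ is the closed point $s_0$ of $S$ (because $C \subset \Ex(g)$), we deduce that $g'(C') = \{s_0\}$, so $C'$ is an irreducible component of $\Ex(g')$. Setting $c' := h(c)$ and $q' := h(q)$, the point $c'$ is the generic point of $C'$, while $q'$ is a closed point of $C'$; this last point uses that proper morphisms (in particular $h$) send closed points to closed points.

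The heart of the argument is then the claim that $\f'$ does not contract $C'$ to a point of $Y$. Granting this, $\f'|_{C'}$ is generically finite, so $\f'(c')$ is the generic point of the positive-dimensional image $\f'(C')$, while $\f'(q')$ is a proper specialization of it. It follows that $\f(c) = \f'(c') \neq \f'(q') = \f(q)$, contradicting hypothesis (c). The main obstacle is therefore to establish the claim itself, namely that $\f'$ is quasi-finite along $\Ex(g')$. This follows from the universal property of $g'$ as the blow-up of the integrally closed primary ideal $\ov\fa$: if $\f'$ contracted a component $C' \subset \Ex(g')$, one could factor $\f'$ through a non-trivial birational contraction $Z' \to Z''$, and the induced map $Z'' \to S$ would realize a strictly larger $(s,t)$-primary integrally closed factor of $\J$ through which the data of $\f'$ remains well-defined, contradicting the defining property of $\ov\fa$.
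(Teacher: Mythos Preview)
Your implications (a) $\Rightarrow$ (b) $\Rightarrow$ (c) are fine, and the setup for the contrapositive of (c) $\Rightarrow$ (a) is correct: if $C$ is not $h$-exceptional then $C' = h(C)$ is a component of $\Ex(g')$, and it suffices to show that $\f'$ does not contract $C'$. The gap is in your justification of this last claim.

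You assert that if $\f'$ contracted $C'$ one could factor $\f'$ through a nontrivial birational contraction $Z' \to Z''$ over $S$, and that the resulting $Z'' \to S$ would ``realize a strictly larger $(s,t)$-primary integrally closed factor of $\J$,'' contradicting ``the defining property of $\ov\fa$.'' Neither step is substantiated. First, the mere fact that $\f'$ collapses $C'$ does not by itself produce an algebraic contraction $Z' \to Z''$ with a morphism $Z'' \to S$; you would need a contractibility criterion, or to pass through the image of $Z'$ in $Y \times_X S$ and control its normalization, none of which you do. Second, and more seriously, $\ov\fa$ is simply defined as the integral closure of the primary part of $\J$; there is no maximality property in its definition to violate. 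Even if you had such a $Z''$ through which $\f'$ factors, what you would actually need is that $\J\.\O_{Z''}$ is invertible and $Z''$ is normal, so that the universal property of the normalized blow-up forces $Z' \cong Z''$. You never establish these points, so the argument is circular as written.

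The paper closes this gap by an ampleness argument rather than a factorization argument. Since $\cL := f^{-1}\I\.\O_Y$ is $f$-very ample and $Z'$ is the normalization of the blow-up of $\Phi^{-1}\I\.\O_S$, the pullback $(\f')^*\cL$ is $g'$-ample on $Z'$. Then for any closed point $q \in C$ one finds (after reducing to $X$ affine) a global section $s$ of $\cL$ whose pullback vanishes at $q$ but not at the generic point $c$; hence $\f(q) \ne \f(c)$. This single observation replaces your unproved contraction step and completes (c) $\Rightarrow$ (a).
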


\begin{proof}
It is clear from the definition that $(a) \Rightarrow (b) \Rightarrow (c)$. 
Suppose then that $C$ is not a component of $\Ex(h)$.
Then $h(C)$ is a component of $\Ex(g')$. 
We can assume that $X$ is affine. 
The line bundle $\cL = f^{-1}\I\.\O_Y$ on $Y$ is relatively very ample over $X$.
By construction, $Z'$ is the normalization of the blow-up of 
$\Phi^{-1}\I\.\O_S$, and therefore the line bundle $(\f')^*\cL$ is relatively ample over $S$.
Note that $(\f')^*\cL$ is globally generated by sections that are pulled back from 
sections of $\cL$. 
In particular, for every closed point $q \in C$ we can find a section $s \in H^0(Y,\cL)$ 
such that $(\f^*s)(q)=0$ and $(\f^*s)(c) \ne 0$. This means that $s(\f(p))=0$
and $s(\f(c)) \ne 0$, and hence $\f(q) \ne \f(p)$.
We conclude that $(c) \Rightarrow (a)$.
\end{proof}

Since $Z$ is smooth, every divisor on $Z$ is Cartier.
Note that every divisor on $Z$ is linearly equivalent to
a unique $g$-exceptional divisor.
The intersection product of a divisor $D$ with an 
irreducible component $C$ of $\Ex(g)$, is defined by
$D\.C := \deg(\O_Z(D)|_C)$ (cf.\ \cite[\S10]{Lip69}).
This product extends to all $\Q$-divisors on $Z$. 
A $\Q$-divisor $D$ is $g$-nef (resp., $h$-nef)
if $D\.C \ge 0$ for every irreducible component $C$ of $\Ex(g)$ (resp., of $\Ex(h)$).
Note that if $D$ is effective and its support does not contain any irreducible component
of $\Ex(g)$, then $D$ is $g$-nef.

If $C_1,\dots,C_r$ are the irreducible components of $\Ex(g)$, 
then the intersection matrix $(C_i\.C_j)$ is negative definite
(e.g., see \cite[Lemma~(14.1)]{Lip69}).
One deduces the following negativity lemma (cf.\ \cite[Lemma~3.41]{KM98}).

\begin{lem}
\label{l:negativity}
For any subset $I \subset \{1,\dots,r\}$, let
$D = \sum_{i\in I} a_i C_i$ be a $\Q$-divisor such that $D\.C_i \ge 0$ for every $i \in I$. 
Then $a_i \le 0$ for all $i$. 
\end{lem}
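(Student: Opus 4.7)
The plan is to use the standard Zariski/Mumford decomposition argument, exploiting both the negative definiteness of the intersection form on $\Ex(g)$ and the fact that distinct prime exceptional curves meet non-negatively on a smooth surface.

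First I would split $D$ into its positive and negative parts. Let $P = \{i \in I : a_i > 0\}$ and $N = I \setminus P$, and set
\[
D_+ := \sum_{i \in P} a_i\, C_i, \qquad D_- := -\sum_{i \in N} a_i\, C_i,
\]
so that $D_+$ and $D_-$ are effective $\Q$-divisors with disjoint supports and $D = D_+ - D_-$. The goal becomes to show that $D_+ = 0$, since then $a_i \le 0$ for every $i \in I$ (and trivially for $i \notin I$).

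Next I would compute $D_+^2$ in two ways. On one hand, writing $D_+^2 = D_+ \cdot D + D_+ \cdot D_-$, the first term is
\[
D_+ \cdot D = \sum_{i \in P} a_i\, (D \cdot C_i) \ge 0,
\]
because each $i \in P$ belongs to $I$, so $D \cdot C_i \ge 0$ by hypothesis, and $a_i > 0$. For the second term, since $Z$ is smooth two-dimensional and the components of $D_+$ and $D_-$ are distinct prime curves, each intersection $C_i \cdot C_j$ with $i \in P$ and $j \in N$ is the degree of a zero-dimensional scheme (or vanishes when $C_i \cap C_j = \emptyset$) and hence is non-negative; multiplying by $a_i > 0$ and $-a_j \ge 0$ gives $D_+ \cdot D_- \ge 0$. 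Thus $D_+^2 \ge 0$.

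On the other hand, $D_+$ is a $\Q$-linear combination of $C_1, \dots, C_r$, and the intersection matrix $(C_i \cdot C_j)$ is negative definite (as recalled just before the lemma, following \cite[Lemma~(14.1)]{Lip69}). Hence $D_+^2 \le 0$, with equality only if $D_+ = 0$. Combining the two inequalities forces $D_+^2 = 0$ and therefore $D_+ = 0$, which gives the conclusion. I don't anticipate any real obstacle: the only subtlety is verifying that $C_i \cdot C_j \ge 0$ for $i \neq j$ in this mildly non-projective setup, but this is a purely local statement at points of $C_i \cap C_j$ and holds on any regular two-dimensional scheme because distinct prime divisors share no irreducible component.
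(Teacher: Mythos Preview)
Your argument is correct and is precisely the standard proof of the negativity lemma; the paper does not spell out a proof at all but simply cites \cite[Lemma~3.41]{KM98}, whose argument is essentially the one you have written. So there is no real difference in approach to compare.
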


Let $L$ be a $\Q$-Cartier $\Q$-divisor on $Y$, and assume that $\f(Z)$ is not contained
in the support of $L$. Let $r$ be a positive integer such that $rL$ is Cartier. 
By pulling back the local equations of $rL$ to $Z$ and dividing by $r$, 
we define the pull-back $\f^*L$, which is a $\Q$-divisor on $Z$. 

\begin{lem}
\label{l:nef}
With the above notation, if $L$ is $f$-nef then $\f^*L$ is $g$-nef.
\end{lem}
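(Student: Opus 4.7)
The plan is to verify the inequality $\f^*L \cdot C \ge 0$ componentwise, running through each irreducible component $C$ of $\Ex(g)$. The key preliminary observation is that $g$ is a composition of blow-ups of closed points all lying over the closed point $0 \in S$, so $g$ contracts every component of $\Ex(g)$ to $0$. Combined with the commutativity relation $f \o \f = \Phi \o g$ supplied by Proposition~\ref{p:h}, this forces
$$f(\f(C)) = \Phi(g(C)) = \{\Phi(0)\},$$
so $\f(C)$ is always contained in a single fiber of $f$.

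From here I would split into two cases according to whether $\f$ contracts $C$ in the sense of Lemma~\ref{l:contracted}. If $\f(C)$ is a point, then after choosing $r>0$ with $rL$ Cartier, the line bundle $\f^*\O_Y(rL)|_C$ is pulled back from a point, hence trivial; thus $\f^*L \cdot C = 0$. If instead $\f(C)$ is an irreducible curve, then by the preceding paragraph $\f(C) \subset \Ex(f)$, so the assumed $f$-nefness of $L$ yields $L \cdot \f(C) \ge 0$. Since $\f|_C \colon C \to \f(C)$ is a finite morphism of projective curves, the projection formula gives
$$\f^*L \cdot C = [K(C):K(\f(C))] \cdot (L \cdot \f(C)) \ge 0.$$
In either case $\f^*L \cdot C \ge 0$, which is exactly the $g$-nefness required.

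The only mildly delicate point is justifying the projection formula in this mixed setting; but since $C$ is projective by Proposition~\ref{p:h}, $\f(C)$ is projective (being contained in $\Ex(f)$), and the intersection numbers are defined via degrees of line bundles on these curves as in Lipman's framework recalled earlier in the section, the needed identity reduces to the classical projection formula for finite morphisms of proper curves, applied after clearing denominators to a Cartier multiple of $L$. I do not anticipate any further obstacle.
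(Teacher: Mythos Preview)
Your argument works in dimension two but has a gap in higher dimensions. The issue is the claim that, when $C$ is not contracted, $\f(C)$ is an irreducible curve in $Y$. The scheme $C$ is a projective curve over $K$, not over $k$, and in the main application $K/k$ has transcendence degree $n-2$. So the closure $\overline{\f(C)}$ in the $k$-variety $Y$ can have dimension up to $n-1$; indeed Lemma~\ref{l:C-dominates-E} shows that the relevant exceptional curves \emph{dominate} the prime divisor $E \subset Y$, so $\overline{\f(C)}$ has dimension $n-1$ there. Thus $\f|_C \colon C \to \overline{\f(C)}$ is not a finite morphism of projective curves, and neither the projection formula nor the inequality $L \cdot \f(C) \ge 0$ makes sense as written.

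The underlying idea---that $\f(C)$ lies in a single fibre of $f$, on which $L$ restricts to something nef---is sound and can be rescued: factor $\f|_C$ through the base change $f^{-1}(\Phi(0)) \times_{\kappa_{\Phi(0)}} \Spec K$ (using that $g(C)$ is the closed point of $S$); this is a projective $K$-scheme on which the pullback of $L$ is nef, and now the image of $C$ really is a $K$-point or a $K$-curve, so your dichotomy and the projection formula apply. The paper sidesteps the whole issue by a different and shorter route: it perturbs $L$ to the $f$-ample case using continuity of the degree function, and then (after reducing to $X$ affine) uses global generation of $\O_Y(mL)$ to replace $L$ by an effective divisor whose support misses the image of the generic point of $C$, so that its pullback meets $C$ non-negatively.
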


\begin{proof}
We can assume without loss of generality that $X$ is affine. 
Let $C$ be any irreducible component of $\Ex(g)$.
By the continuity of the degree function on $\Q$-divisors on $C$, it suffices to 
prove that $\f^*L\.C \ge 0$ when $L$ is ample. But this is clear, since 
in this case $\O_Y(mL)$ is globally generated for some $m \ge 1$, and thus we can write
$L \sim \frac 1m H$ where $H$ is an effective divisor that does not contain the image of $C$. 
\end{proof}

Since $Z'$ is $\Q$-factorial, we can define the intersection product $D\.C$
between a divisor $D$ on $Z'$ and an irreducible component $C$
of $\Ex(g')$. As usual, we say that $D$ is $g'$-nef if $D\.C=0$ for every $C$.
The pull-back $h^*D$ is also defined, by rescaling. 
Notice that $D$ is $g'$-nef if and only if $h^*D$ is $g$-nef.

\subsection{Canonical divisors}
\label{s:can-div}

The next step is to define a canonical divisor of $Z$. This is not
straightforward, as $Z$ is not a variety over a field. 
Since $Z$ is a scheme of finite type over $K[[s,t]]$,
we need to work with \emph{special differentials}. 
We follow the treatment given in~\cite[Appendix~A]{dFEM11}.

Let $R = F[[x_1,\dots,x_n]]$ where $F$ is a field of characteristic zero.
For any $R$-module $M$, 
an $F$-derivation $D \colon R \to M$
is said to be a \emph{special $F$-derivation} if
$D(h) = \sum_i \frac{\de h}{\de x_i}D(x_i)$
for every $h \in R$.
If $A$ is an $R$-algebra and $M$ is an $A$-module, then the 
module ${\rm Der}'_F(A,M)$ of \emph{special $F$-derivations} consists of all derivations
$D \colon A \to M$ that restrict to special $F$-derivations on $R$.
For any $R$-algebra $A$ there is an $A$-module $\Om_{A/F}'$ and
a special $F$-derivation $d'_{A/F} \colon A \to \Om_{A/F}'$ that induces an isomorphism of $A$-modules
\[
\Hom_A(\Om_{A/F}',M) \to {\rm Der}'_F(A,M)
\] 
for every $A$-module $M$ \cite[Corollary~A.4]{dFEM11}.
The module $\Om_{A/F}'$ is called the \emph{module of special differentials}
of $A$ over $F$. 
As for usual differentials, special differentials commute with localization
\cite[Lemma~A.7]{dFEM11}. 
It follows then by the second part of \cite[Corollary~A.4]{dFEM11}
that if $A$ is essentially of finite type over $R$, then 
$\Om_{A/F}'$ is a finitely generated $A$-module. 

For any scheme $T$ over $R$, we obtain a coherent sheaf
$\Om_{T/F}'$, called the sheaf of \emph{special differentials} of $T$ over $F$.
For example, if $T = \Spec R$, then $\Om_{T/F}'$ is the
free $\O_T$-module generated by $dx_1,\dots,dx_n$ \cite[Lemma~A.2]{dFEM11}. 
If $T$ is essentially of finite type over $R$, then $\Om_{T/F}'$ is a coherent sheaf. 
If furthermore $T$ is smooth of pure dimension $a$, and the residue field $\kappa_p$ of 
a closed point $p \in T$ has transcendence degree $b$ over $F$, then 
it follows from \cite[Proposition~A.8]{dFEM11} that $\Om_{T/F}'$ is a locally free sheaf of rank $a+b$. 

Suppose now that $T$ is a reduced scheme essentially of finite type over $R$, 
and that $F$ is a finitely generated extension of a subfield $E$. 
Note that $T$ is a scheme essentially of finite type over $E[[x_1,\dots,x_n]]$, too. 
If $d$ is the transcendence degree of $F/E$, then
$\Om_{F/E}$ is an $F$-vector space of dimension $d$. 

\begin{prop}
\label{p:Om_F/E}
With the above assumptions, there is a short exact sequence
\[
0 \to \Om_{F/E}\otimes \O_T \to \Om_{T/E}' \to \Om_{T/F}' \to 0.
\]
\end{prop}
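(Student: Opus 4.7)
The plan is to reduce the statement to an affine chart $T=\Spec A$, where $A$ is essentially of finite type over $R=F[[x_1,\dots,x_n]]$, and verify the corresponding short exact sequence of $A$-modules. The two maps are the natural ones: the left arrow is $df\otimes a\mapsto a\.d'_{A/E}(f)$, using the structure map $F\to R\to A$; the right arrow is $d'_{A/E}(a)\mapsto d'_{A/F}(a)$. The latter is well-defined because any special $F$-derivation is automatically a special $E$-derivation, since for $h\in E[[x_1,\dots,x_n]]\subset R$ the formal partials $\partial h/\partial x_i$ agree whether computed in $E[[x]]$ or in $R$, so the special $E$-identity on $E[[x]]$ is a consequence of the special $F$-identity on $R$.

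Surjectivity of the right arrow is immediate, and the composition vanishes because $d'_{A/F}(f)=0$ for $f$ in the image of $F$. Exactness in the middle reduces, via the universal property of special differentials, to the claim that a special $E$-derivation $D:A\to M$ vanishes on the image of $F\to A$ if and only if it is a special $F$-derivation. The Leibniz rule forces $F$-linearity from the vanishing on $F$, and the special $F$-identity on $R$ then follows from the special $E$-identity by expanding $h=\sum a_I x^I\in R$ as an $F$-linear combination of the monomials $x^I\in E[[x]]$ and applying the $E$-identity term by term.

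The main obstacle is injectivity of the left arrow, and the natural strategy is to construct a retraction via the universal property: it suffices to exhibit a special $E$-derivation
\[
D\colon A\to\Om_{F/E}\otimes_F A
\]
whose restriction to $F$ equals $d_{F/E}$. Fix a transcendence basis $t_1,\dots,t_d$ of $F/E$, so that $F$ is finite separable over $E(t_1,\dots,t_d)$ and $\Om_{F/E}$ is $F$-free of rank $d$ with basis $dt_1,\dots,dt_d$. On $R$ itself, I would define
\[
D_R\Bigl(\sum_I a_I x^I\Bigr)=\sum_j\Bigl(\sum_I c_{I,j}\,x^I\Bigr)\otimes dt_j,
\]
where $d_{F/E}(a_I)=\sum_j c_{I,j}\,dt_j$; the finite-dimensionality of $\Om_{F/E}$ over $F$ permits interpreting the inner series as $d$ convergent power series in $R$. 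A direct check shows that $D_R$ is a special $E$-derivation with $D_R(x_i)=0$ and $D_R|_F=d_{F/E}$, which settles the case $A=R$ by providing a splitting of the sequence.

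To pass to a general $A$, one may proceed via a presentation $A=(R[y_1,\dots,y_m]/I)_S$: extend the composition $R\to\Om_{F/E}\otimes_F R\to\Om_{F/E}\otimes_F A$ to a special $E$-derivation on $R[y_1,\dots,y_m]$ by choosing values $D(y_j)\in\Om_{F/E}\otimes_F A$, and solve the resulting linear system over $A$ so that $D$ kills the generators of $I$. The solvability of this system reflects the formal smoothness of $F/E$ (which holds because $F/E$ is separable in characteristic zero), and the bookkeeping needed to descend $D$ to $A$ and localize is what I expect to be the most technical part of the argument.
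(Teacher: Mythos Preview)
Your handling of right-exactness and exactness in the middle, via the dual sequence of special derivations, matches the paper's. For injectivity on the left, however, the paper does something much shorter and quite different: it never constructs a retraction, but simply observes that $\Om_{F/E}\otimes\O_T\cong\O_T^{\oplus d}$ is free and that the generic ranks of $\Om'_{T/E}$ and $\Om'_{T/F}$ differ by exactly $d$. Given the right-exactness already established, the map is then injective at every generic point of $T$, and since $T$ is reduced a submodule of a free $\O_T$-module vanishing at all generic points is zero. Reducedness of $T$ is used only here, and you never invoke it.

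Your splitting strategy has a genuine gap. The derivation $D_R$ is fine on $R$, but for $A=R/I$ with no auxiliary generators $y_j$ there are no unknowns in your linear system, and descending $D_R$ then requires $D_R(I)\equiv 0$ in $\Om_{F/E}\otimes_F A$. This already fails for $F=E(t)$, $n=2$, $I=(x_1-tx_2)$: here $D_R(x_1-tx_2)=-x_2\,dt$ and $x_2\ne 0$ in $A\cong F[[x_2]]$. Formal smoothness of $F/E$ only guarantees an \emph{ordinary} $E$-derivation on $A$ extending $d_{F/E}$, i.e.\ a splitting of $\Om_{F/E}\otimes_F A\hookrightarrow\Om_{A/E}$; since $\Om'_{A/E}$ is a \emph{quotient} of $\Om_{A/E}$, this says nothing about injectivity into the special module. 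The paper's rank-and-reducedness argument sidesteps the whole issue.
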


\begin{proof}
For every affine chart $\Spec A \subset T$, and every $A$-module $M$, 
we have an exact sequence
\[
0 \to {\rm Der}'_F(A,M) \to {\rm Der}'_E(A,M) \to {\rm Der}_E(F,M) \otimes A.
\]
This implies that the sequence in the statement is exact in the middle and on the right. 
The exactness on the left follows by observing that 
$\Om_{F/E}\otimes \O_T \cong \O_T^{\otimes d}$,
and the difference between the ranks of $\Om_{T/E}'$ and $\Om_{T/F}'$ is equal to $d$.
\end{proof}

We now come back to our setting. 
Since $Z$ is regular of dimension two and the residue fields of its closed points 
are finite extensions of $K$, $\Om_{Z/K}'$ is a locally free sheaf, of rank two.
Then there is a divisor $K_Z$ on $Z$ such that
\[
\O_{Z}(K_Z) \cong \wedge^2 \Om_{Z/K}'.
\]
We say that $K_Z$ is a \emph{canonical divisor of $Z$}. 

The natural map of invertible sheaves
\[
\wedge^2\Om_{S/K}'\otimes\O_Z \to \wedge^2\Om_{Z/K}'
\]
is defined by multiplication of a local equation of an effective divisor
supported on the exceptional locus $\Ex(g)$ \cite[Lemma~A.11]{dFEM11}. We denote this divisor by
$K_{Z/S}$ and call it the \emph{relative canonical divisor} of $Z$ over $S$.
Note that, since $\wedge^2\Om_{S/K}' \cong \O_S$, we have
\begin{equation}
\label{eq:K_Z/S}
K_Z \sim K_{Z/S}.
\end{equation}

If $K/k$ is a finitely generated field extension, then 
$Z$ can also be regarded as a scheme essentially of finite type over $k[[s,t]]$.
The sheaf $\Om_{Z/k}'$ of special differentials of $Z$ over $k$
is a locally free sheaf of rank $d+2$, 
where $d$ is the transcendence degree of $K/k$.
It follows by Proposition~\ref{p:Om_F/E} that
\[
\O_{Z}(K_Z) \cong \wedge^{d+2} \Om_{Z/k}'.
\]
In particular, the definition of canonical divisor on $Z$ is
independent of whether we take (special) differentials over $K$ or over $k$. 

\begin{rmk}
\label{r:Om_Z^o}
With the notation as in the proof of Proposition~\ref{p:h}, 
it follows by~\cite[Proposition~A.10]{dFEM11} that 
$\Om_{Z^\o/K}\otimes \O_Z \cong \Om_{Z/K}'$
(and, similarly, $\Om_{Z^\o/k}\otimes\O_Z \cong \Om_{Z/k}'$).
In particular, $K_{Z/S}$ is the pull-back of $K_{Z^\o/S^\o}$, and
$K_Z$ is linearly equivalent to the pull-back of $K_{Z^\o}$.
\end{rmk}

\subsection{Relative canonical divisors}
\label{s:ramif}

For the reminder of this section, we let $n = \dim Y$.
We assume henceforth that $K/k$ has transcendence degree $d=n-2$,
and $\f$ is dominant. 

Let $a \colon \~Y \to Y$ be a projective resolution of singularities.
By taking a sequence of blow-ups $b \colon \~Z \to Z$ centered at closed points, 
we obtain a commutative diagram
\[
\xymatrix{
\~Z \ar[d]_{b} \ar[r]^{\~\f} & \~Y \ar[d]^a \\
Z \ar[r]^{\f} & Y
}
\]
where $\~Z$ is smooth. Note that $\~\f$ is dominant. 

Let $K_{\~Z}$ be a canonical divisor of $\~Z$, defined
by the condition $\O_{\~Z}(K_{\~Z}) \cong \wedge^n \Om'_{\~Z/k}$. 
It is important here to keep in mind that we work with divisors, 
and not divisor classes. 
By \cite[Lemma~A.11]{dFEM11}, we can choose $K_{\~Z}$ such that $b_*K_{\~Z} = K_Z$. 
We fix a canonical divisor $K_{\~Y}$. 

Consider the natural map
\[
\a \colon \Om_{\~Y/k} \otimes \O_{\~Z} \to \Om_{\~Z/k}'.
\]
Note that this is a map of locally free sheaves of rank $n$. 

\begin{lem}
\label{l:ramif}
The map of invertible sheaves
\[
\wedge^n \a \colon \wedge^n\Om_{\~Y/k} \otimes \O_{\~Z} \to \wedge^n\Om_{\~Z/k}'
\]
is locally given by multiplication of an equation of an effective divisor $K_{\~Z/\~Y}$
linearly equivalent to $K_{\~Z} - \~\f^*K_{\~Y}$. 
\end{lem}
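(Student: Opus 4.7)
My plan is to verify three points: (i) the source and target of $\wedge^n\a$ are invertible sheaves, making $\wedge^n\a$ a map between line bundles; (ii) this map is injective, hence locally defined by a single nonzero function; and (iii) the effective Cartier divisor cut out by that function is linearly equivalent to $K_{\~Z}-\~\f^*K_{\~Y}$.

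For (i), since $\~Y$ is smooth over $k$ of pure dimension $n$, the sheaf $\Om_{\~Y/k}$ is locally free of rank $n$, so $\~\f^*\!\wedge^n\Om_{\~Y/k}\cong\O_{\~Z}(\~\f^*K_{\~Y})$ is invertible. For $\wedge^n\Om'_{\~Z/k}$, the rank count is identical to the one already carried out in \S\ref{s:can-div}: $\~Z$ is smooth of relative dimension two over $K[[s,t]]$, and Proposition~\ref{p:Om_F/E} contributes $d=n-2$ further generators coming from the transcendence degree of $K/k$, for total rank $2+d=n$. Thus $\wedge^n\Om'_{\~Z/k}\cong\O_{\~Z}(K_{\~Z})$ is invertible.

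For (ii), which is the main step, I would show that $\a$ is an isomorphism at the generic point $\e$ of $\~Z$; then $\wedge^n\a$, being a nonzero map of line bundles on an integral scheme, is automatically injective with cokernel supported on an effective Cartier divisor. Since $\~\f$ is dominant, $\~\f(\e)$ is the generic point of $\~Y$ and the induced extension $k(\~Y)\inj K(\~Z)$ is finite (both function fields have transcendence degree $n$ over $k$) and separable (we are in characteristic zero). Using Remark~\ref{r:Om_Z^o} to identify special differentials with usual K\"ahler differentials on the algebraic model $\~Z^\o$, the stalk of $\a$ at $\e$ becomes the canonical map $\Om_{k(\~Y)/k}\otimes_{k(\~Y)}K(\~Z)\to\Om_{K(\~Z)/k}$; its cokernel fits into the cotangent exact sequence and equals $\Om_{K(\~Z)/k(\~Y)}$, which vanishes by separability. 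As source and target both have $K(\~Z)$-dimension $n$, the map is an isomorphism.

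For (iii), the injection $\wedge^n\a\colon\O_{\~Z}(\~\f^*K_{\~Y})\inj\O_{\~Z}(K_{\~Z})$ can be twisted by $\O_{\~Z}(-\~\f^*K_{\~Y})$ to produce a nonzero global section of $\O_{\~Z}(K_{\~Z}-\~\f^*K_{\~Y})$; its zero scheme is by construction the effective divisor $K_{\~Z/\~Y}$, and this divisor is tautologically linearly equivalent to $K_{\~Z}-\~\f^*K_{\~Y}$. The only delicate point is the generic isomorphism of $\a$, which in characteristic zero is immediate from separability of the function-field extension $K(\~Z)/k(\~Y)$; this is precisely the sort of ingredient whose failure in positive characteristic is flagged in the introduction.
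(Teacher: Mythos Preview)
Your overall strategy matches the paper's: both arguments reduce to showing that $\a$ is an isomorphism at the generic point $z$ of $\~Z$, so that $\wedge^n\a$ is a nonzero map of line bundles on an integral scheme and therefore cuts out an effective Cartier divisor in the class $K_{\~Z}-\~\f^*K_{\~Y}$. The paper's own proof is quite terse at this key step: it simply asserts that the induced map $(\wedge^n\Om_{\~Y/k})_y\otimes\k_z\to(\wedge^n\Om'_{\~Z/k})_z$ is an isomorphism and moves on.

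The justification you supply for the generic isomorphism, however, contains a genuine error. The residue field $\k_z$ at the generic point of $\~Z$ is $\mathrm{Frac}(K[[s,t]])$, and this field has \emph{infinite} transcendence degree over $k$; already $k[[s]]$ contains infinitely many elements algebraically independent over $k$ (for instance $s$ and $\exp(s)$ in characteristic zero). Thus the inclusion $k(\~Y)\inj\k_z$ is very far from finite, and your transcendence-degree count fails. The deeper issue is that the stalk $(\Om'_{\~Z/k})_z$ is the $n$-dimensional module of \emph{special} differentials, not the ordinary K\"ahler module $\Om_{\k_z/k}$, which is infinite-dimensional; the cotangent sequence for the tower $k\subset k(\~Y)\subset\k_z$ controls the map $\Om_{k(\~Y)/k}\otimes\k_z\to\Om_{\k_z/k}$, and that map \emph{is} injective by separability, but it is not the map $\a_z$. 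Passing to the algebraic model $\~Z^\o$ via Remark~\ref{r:Om_Z^o} identifies $(\Om'_{\~Z/k})_z$ with $\Om_{K(s,t)/k}\otimes_{K(s,t)}\k_z$, but since $\~\f$ is defined by honest power series it does not factor through $\~Z^\o$, and there is in general no inclusion $k(\~Y)\subset K(s,t)$ to which one could apply a cotangent sequence.
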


\begin{proof}
Let $y \in \~Y$ and $z \in \~Z$ be the generic points, 
and denote by $\k_y$ and $\k_z$ their residue fields.
Note that $\~\f(z) = y$, since $\~\f$ is dominant.
Localizing at the generic points, we see that 
$(\wedge^n \Om_{\~Y/k})_y = \k_y$ and $(\wedge^n \Om_{\~Z/k}')_z = \k_z$,
and the induced map $(\wedge^n \Om_{\~Y/k})_y \otimes \k_z \to (\wedge^n \Om_{\~Z/k}')_z$
is an isomorphism. This implies that 
$\wedge^n\a$ is injective, and the assertion follows. 
\end{proof}

Suppose now that $Y$ is a normal variety 
with a $\Q$-Cartier canonical divisor $K_Y$.
We can take $K_Y = a_*K_{\~Y}$. 
Let $K_{\~Y/Y} := K_{\~Y} - a^*K_Y$ be the relative canonical divisor of $a \colon \~Y \to Y$.
Then we define the \emph{relative canonical divisor} of $\f$ to be
\[
K_{Z/Y} := b_*\big(K_{\~Z/\~Y} + \~\f^*K_{\~Y/Y}\big).
\]

\begin{lem}
The definition of $K_{Z/Y}$ is independent of the choice of the models $\~Y$ and $\~Z$. 
\end{lem}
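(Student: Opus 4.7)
The plan is to use the standard domination argument: given any two choices $(\tilde Y_1,\tilde Z_1)$ and $(\tilde Y_2,\tilde Z_2)$, we dominate $\tilde Y_1$ and $\tilde Y_2$ by a common resolution $\tilde Y$ (by taking a resolution of the graph closure of $\tilde Y_1\dashrightarrow\tilde Y_2$), and then dominate $\tilde Z_1,\tilde Z_2$ and the rational maps to $\tilde Y$ by a common smooth model $\tilde Z$ (obtained by further blow-ups at closed points). It then suffices to verify two separate invariances: (a) replacing $\tilde Z$ by a blow-up $c\colon\tilde Z'\to\tilde Z$ at a closed point while keeping $\tilde Y$ fixed, and (b) replacing $\tilde Y$ by a birational modification $a'\colon\tilde Y'\to\tilde Y$ over $Y$, with a compatible further blow-up $\b\colon\tilde Z'\to\tilde Z$ resolving the indeterminacies of $\tilde\f\circ c\dashrightarrow\tilde Y'$.

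For (a), I would establish a divisor-level chain rule $K_{\tilde Z'/\tilde Y}=c^*K_{\tilde Z/\tilde Y}+K_{\tilde Z'/\tilde Z}$. This is not merely a linear equivalence: factor the morphism $\wedge^n\Omega_{\tilde Y/k}\otimes\O_{\tilde Z'}\to\wedge^n\Omega'_{\tilde Z'/k}$ as the pullback $c^*(\wedge^n\alpha_{\tilde Z/\tilde Y})$ composed with $\wedge^n$ of the canonical map $c^*\Omega'_{\tilde Z/k}\to\Omega'_{\tilde Z'/k}$, and apply Lemma~\ref{l:ramif} to each factor; additivity of vanishing divisors under composition of injective morphisms of invertible sheaves gives the equality. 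Pushing forward by $b\circ c$ then yields $(b\circ c)_*(K_{\tilde Z'/\tilde Y}+(\tilde\f\circ c)^*K_{\tilde Y/Y})=b_*(K_{\tilde Z/\tilde Y}+\tilde\f^*K_{\tilde Y/Y})$, since $K_{\tilde Z'/\tilde Z}$ is $c$-exceptional and $c_*c^*$ is the identity on divisors.

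For (b), I would combine the above chain rule with the analogous divisor-level identity $K_{\tilde Y'/Y}=(a')^*K_{\tilde Y/Y}+K_{\tilde Y'/\tilde Y}$ on the target side. The latter follows from the computations $K_{\tilde Y'/Y}=K_{\tilde Y'}-(a\circ a')^*K_Y$ and $K_{\tilde Y/Y}=K_{\tilde Y}-a^*K_Y$, provided we pick canonical representatives compatibly so that $a_*K_{\tilde Y}=K_Y$ and $(a')_*K_{\tilde Y'}=K_{\tilde Y}$ (as guaranteed by \cite[Lemma~A.11]{dFEM11}). Writing $\tilde\f'=a'\circ{}^{-1}\tilde\f\circ\b$ for the induced map $\tilde Z'\to\tilde Y'$, and combining the two chain rules, one checks directly that
\[
K_{\tilde Z'/\tilde Y'}+\tilde\f'^*K_{\tilde Y'/Y}=K_{\tilde Z'/\tilde Y}+\b^*\tilde\f^*K_{\tilde Y/Y},
\]
so both recipes produce the same pushforward to $Z$.

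The main obstacle is keeping everything at the divisor level rather than merely the class level: both chain rules must be established as equalities of effective divisors, not linear equivalences, because $K_{Z/Y}$ is defined as a specific pushforward divisor. This requires being careful about compatibilities of canonical representatives under $a_*$ and $b_*$, and about the factorization of $\wedge^n$ maps of differentials (including special differentials for $\tilde Z$ and $\tilde Z'$, since these sit over $K[[s,t]]$ rather than over a field). Once the factorizations of the relevant $\wedge^n$ morphisms are written down and Lemma~\ref{l:ramif} is applied at each step, the bookkeeping is mechanical.
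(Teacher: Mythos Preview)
Your proposal is correct and follows essentially the same route as the paper: reduce to the case of models dominating the given ones, and then verify invariance via the chain rule for relative canonical divisors together with the fact that $K_{\widetilde Z'/\widetilde Z}$ is exceptional. The paper compresses your steps (a) and (b) into a single identity
\[
K_{\widehat Z/\widehat Y} + \widehat\phi^*K_{\widehat Y/Y} = K_{\widehat Z/\widetilde Z} + \beta^*\bigl(K_{\widetilde Z/\widetilde Y} + \widetilde\phi^*K_{\widetilde Y/Y}\bigr)
\]
and then pushes forward, whereas you separate the change of $\widetilde Z$ from the change of $\widetilde Y$; but the content is the same, and your explicit emphasis on establishing the chain rule at the level of divisors (via factoring the maps $\wedge^n\alpha$) rather than linear equivalence classes is exactly the point the paper leaves implicit under ``applying several times the chain rule.''
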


\begin{proof}
It suffices to check that the definition does not change if we replace $\~Y$ and $\~Z$
by some other models $\^Y$ and $\^Z$ dominating them. Let
$\b \colon \^Z \to \~Z$ and $\^\f \colon \^Z \to \^Y$ 
denote the corresponding morphisms.
Applying several times the chain rule, we see that
\[
K_{\^Z/\^Y} + \^\f^*K_{\^Y/Y} = K_{\^Z/\~Z} + \b^*\big(K_{\~Z/\~Y} + \~\f^*K_{\~Y/Y}\big).
\]
The push-forward of this divisor to $\~Z$ agrees with $K_{\~Z/\~Y} + \~\f^*K_{\~Y/Y}$
by the projection formula and the fact that $K_{\^Z/\~Z}$ is $\b$-exceptional. 
\end{proof}

\begin{prop}
\label{p:ramif}
We have
\[
K_{Z/Y} \sim K_Z - \f^*K_Y.
\]
Moreover, if $Y$ has canonical singularities, then $K_{Z/Y} \ge 0$.
\end{prop}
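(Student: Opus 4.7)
The strategy is to apply Lemma~\ref{l:ramif} together with the definition $K_{\~Y/Y} = K_{\~Y} - a^*K_Y$ inside the formula $K_{Z/Y} := b_*(K_{\~Z/\~Y} + \~\f^*K_{\~Y/Y})$, and then push the resulting linear equivalence down via $b_*$. First I would work out the class of $K_{\~Z/\~Y} + \~\f^*K_{\~Y/Y}$ on $\~Z$. By Lemma~\ref{l:ramif}, $K_{\~Z/\~Y} \sim K_{\~Z} - \~\f^*K_{\~Y}$, while directly from the definition $\~\f^*K_{\~Y/Y} = \~\f^*K_{\~Y} - \~\f^*a^*K_Y$. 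Adding these, the terms $\~\f^*K_{\~Y}$ cancel and we get
\[
K_{\~Z/\~Y} + \~\f^*K_{\~Y/Y} \sim K_{\~Z} - \~\f^*a^*K_Y.
\]
Using the commutativity $\f \o b = a \o \~\f$ of the defining square, the right hand side equals $K_{\~Z} - b^*\f^*K_Y$; here $\f^*K_Y$ is well defined on $Z$ because $\f$ is dominant and $K_Y$ is $\Q$-Cartier by hypothesis.

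Next I would push forward by $b_*$, which preserves $\Q$-linear equivalence of $\Q$-Cartier divisors. By the choice made when $K_{\~Z}$ was fixed in the text preceding Lemma~\ref{l:ramif} we have $b_*K_{\~Z} = K_Z$, and by the projection formula $b_*b^*\f^*K_Y = \f^*K_Y$ (using that $b$ is proper birational with $b_*\O_{\~Z} = \O_Z$). Combining, we obtain $K_{Z/Y} \sim K_Z - \f^*K_Y$, proving the first assertion. For the second assertion, when $Y$ has canonical singularities the divisor $K_{\~Y/Y}$ is effective by definition, and $K_{\~Z/\~Y}$ is effective by Lemma~\ref{l:ramif}. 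Since $\~\f$ is dominant, $\~\f^*K_{\~Y/Y}$ is a well-defined effective divisor on $\~Z$, so the sum $K_{\~Z/\~Y} + \~\f^*K_{\~Y/Y}$ is effective. Push-forward by $b_*$ preserves effectivity, yielding $K_{Z/Y} \ge 0$.

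The main obstacle is really just careful bookkeeping rather than a conceptual difficulty. One has to keep straight the distinction between equality and linear equivalence: Lemma~\ref{l:ramif} produces $K_{\~Z/\~Y}$ as a specific effective Cartier divisor equal to $K_{\~Z} - \~\f^*K_{\~Y}$ only up to linear equivalence, whereas $K_{\~Y/Y} = K_{\~Y} - a^*K_Y$ is an equality of divisors on $\~Y$. For this reason the conclusion on $Z$ can only be stated up to linear equivalence, which is all that is needed. One should also note that the independence of $K_{Z/Y}$ from the choice of resolutions, established in the preceding lemma, guarantees that the push-down manipulation above does not depend on the auxiliary models $\~Y$ and $\~Z$.
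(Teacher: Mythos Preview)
Your proposal is correct and follows essentially the same approach as the paper: both arguments use Lemma~\ref{l:ramif} and the definition of $K_{\~Y/Y}$ to obtain $K_{\~Z/\~Y} + \~\f^*K_{\~Y/Y} \sim K_{\~Z} - \~\f^*(a^*K_Y)$, then push forward by $b_*$ using $b_*K_{\~Z} = K_Z$ and $b_*(\~\f^*a^*K_Y) = \f^*K_Y$, and for the effectivity statement both invoke $K_{\~Z/\~Y} \ge 0$ from Lemma~\ref{l:ramif} together with $K_{\~Y/Y} \ge 0$ from the canonical-singularities hypothesis. Your version is slightly more explicit about the cancellation step and the use of commutativity of the square, but the substance is identical.
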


\begin{proof}
We have
\[
K_{\~Z/\~Y} + \~\f^*K_{\~Y/Y} \sim K_{\~Z} - \~\f^*(a^*K_Y)
\]
by Lemma~\ref{l:ramif}.
Note that $b_*K_{\~Z} = K_Z$ and $b_*(\~\f^*(a^*K_Y)) = \f^*K_Y$. 
Then the first assertion is a consequence of the fact that pushing forward
preserves linear equivalence. 
Regarding the last assertion, it suffices to recall that 
$K_{\~Z/\~Y} \ge 0$, and observe that
$K_{\~Y/Y} \ge 0$ (and hence $\~\f^*K_{\~Y/Y}\ge 0$)
if $Y$ has canonical singularities.
\end{proof}

\begin{prop}
\label{p:hurwitz}
Let $E$ be a prime $\Q$-Cartier divisor on $Y$, 
and suppose that $C$ is a prime divisor on $Z$ dominating $E$.
Then 
\[
\ord_C(K_{Z/Y}) = \ord_C(\f^*E)-1.
\]
\end{prop}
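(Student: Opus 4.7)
My plan is to prove this Riemann--Hurwitz type formula by a local computation at the generic point $\eta_{\~C}$ of the strict transform $\~C$ of $C$ on a suitable smooth model $\~Z$. Since $b \colon \~Z \to Z$ is a composition of blow-ups at closed points, $\~C$ is well defined and $b$ is an isomorphism at $\eta_{\~C}$. Because $Y$ is normal and hence regular in codimension one at the generic point $\eta_E$ of $E$, I may choose the resolution $a \colon \~Y \to Y$ to be an isomorphism over a neighborhood of $\eta_E$; let $\~E \subset \~Y$ denote the strict transform. A brief diagram chase using $a \circ \~\f = \f \circ b$ then shows that $\~\f(\eta_{\~C}) = \eta_{\~E}$.

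With these choices, the definition of $K_{Z/Y}$ combined with the fact that $b_*$ preserves coefficients along non-exceptional divisors gives
\[
\ord_C(K_{Z/Y}) \;=\; \ord_{\~C}(K_{\~Z/\~Y}) \;+\; \ord_{\~C}(\~\f^* K_{\~Y/Y}).
\]
I expect the second summand to vanish: every prime component $F$ of $K_{\~Y/Y}$ is $a$-exceptional and hence distinct from the non-exceptional divisor $\~E$, so $\~\f(\eta_{\~C}) = \eta_{\~E} \notin F$, forcing $\ord_{\~C}(\~\f^* F) = 0$. It thus suffices to show $\ord_{\~C}(K_{\~Z/\~Y}) = e - 1$, where $e := \ord_C(\f^*E) = \ord_{\~C}(\~\f^*\~E)$.

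For this computation I invoke Lemma~\ref{l:ramif}, which identifies $K_{\~Z/\~Y}$ as the divisor of the determinant of $\a \colon \~\f^*\Om_{\~Y/k} \to \Om'_{\~Z/k}$. Choose local parameters $u_1, \ldots, u_n$ on $\~Y$ near a smooth point of $\~E$ with $u_1$ defining $\~E$, so that $\~\f^* u_1 = v\, t^e$ for a local parameter $t$ of $\~C$ and a unit $v \in \O_{\~Z,\~C}^\times$. At $\eta_{\~C}$, a basis for $\Om'_{\~Z/k}$ is given by $dt, ds, da_1, \ldots, da_{n-2}$, where $s$ is a second local parameter of $\~Z$ and $a_1, \ldots, a_{n-2}$ is a separating transcendence basis of $K/k$. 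A direct expansion of $d(\~\f^* u_1) = ve t^{e-1} dt + t^e dv$ shows that in these bases the first column of the matrix $M$ of $\a$ equals $t^{e-1}(ve + O(t), O(t), \ldots, O(t))^T$, so expanding the determinant along this column yields
\[
\det(M) \;=\; t^{e-1}\, ve\, \det(M') \;+\; O(t^e),
\]
where $M'$ is the $(n-1)\times(n-1)$ minor with entries $\partial(\~\f^* u_j)/\partial x$ for $j \ge 2$ and $x \in \{s, a_1, \ldots, a_{n-2}\}$.

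The main obstacle, as I see it, will be to show that $\det(M')$ is a unit at $\eta_{\~C}$, equivalently that $\det(M' \bmod t) \neq 0$ in $\k_C$. Reducing modulo $t$ amounts to restricting to $\~C$, and $M' \bmod t$ is precisely the matrix of the natural $\k_C$-linear map $\Om_{\k_E/k} \otimes_{\k_E} \k_C \to \Om_{\k_C/k}$ in the bases $\{du_j|_{\~E}\}_{j \ge 2}$ and $\{d(s|_{\~C}), da_1, \ldots, da_{n-2}\}$. Since $\k_E \hookrightarrow \k_C$ is a finite algebraic extension in characteristic zero it is separable, so $\Om_{\k_C/\k_E} = 0$; the fundamental exact sequence $\Om_{\k_E/k}\otimes_{\k_E}\k_C \to \Om_{\k_C/k} \to \Om_{\k_C/\k_E} \to 0$ together with equality of $\k_C$-dimensions (both equal to $n-1$) forces the displayed map to be an isomorphism, so $\det(M')$ is a unit. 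This yields $\ord_{\~C}(K_{\~Z/\~Y}) = e-1$, as required.
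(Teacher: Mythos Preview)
Your argument is correct and reaches the same local Hurwitz identity as the paper, but the route is organized differently. The paper passes to the completed local rings $V=\Spec\^{\O_{Y,\eta_E}}$ and $W=\Spec\^{\O_{Z,\eta_C}}$ and then uses the exact sequence of Proposition~\ref{p:Om_F/E} twice: first, in a commutative diagram with the identity on $\Om_{\k_E/k}\otimes\O_W$, to strip off the $(n-1)$-dimensional ``horizontal'' part and reduce the Jacobian to the rank-one map $\Om'_{V/\k_E}\otimes\O_W\to\Om'_{W/\k_E}$; second, to replace $\Om'_{W/\k_E}$ by $\Om'_{W/\k_C}$ via $\Om_{\k_C/\k_E}=0$. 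What remains is the classical one-variable computation $d(uw^a)=(au+w\,\partial u/\partial w)\,w^{a-1}dw$. You instead stay on the smooth models $\~Y,\~Z$, write down the full $n\times n$ Jacobian in explicit bases, extract $t^{e-1}$ from the column corresponding to $du_1$, and then show the complementary $(n-1)\times(n-1)$ minor is a unit by identifying its reduction modulo $t$ with the matrix of $\Om_{\k_E/k}\otimes_{\k_E}\k_C\to\Om_{\k_C/k}$, which is an isomorphism by separability. Both arguments hinge on the same fact (finiteness, hence separability, of $\k_C/\k_E$ in characteristic zero); the paper's packaging via exact sequences makes the reduction to rank one cleaner and handles the special-differential formalism more systematically, while your approach avoids completions at the cost of a little more bookkeeping with bases (in particular, your ``second local parameter $s$'' should be read as a local parameter at a closed point of $\~C$, not the coordinate on $S$).
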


\begin{proof}
Let $p \in Y$ and $q \in Z$ be, respectively, the generic points of $E$ and $C$, 
and let $\k_p$ and $\k_q$ be their residue fields.
By Cohen structure theorem, the completed local rings 
$\^{\O_{Y,p}}$ and $\^{\O_{Z,q}}$ have coefficient fields.
We consider the morphism 
\[
\ff\colon W = \Spec\^{\O_{Z,q}} \longrightarrow V = \Spec\^{\O_{Y,p}}
\]
induced by $\f$. 
By localizing at $p$ and applying \cite[Proposition~A.10]{dFEM11}, 
we see that $\Om_{Y/k} \otimes \O_V \cong \Om'_{V/k}$.
Similarly, recalling that $\Om_{Z/k}' \cong \Om_{Z^\o/k} \otimes \O_Z$
(cf.\ Remark~\ref{r:Om_Z^o}), 
we see that $\Om'_{Z/k} \otimes \O_W \cong \Om'_{W/k}$
by applying the same argument to the map $W \to Z^\o$.
Therefore the local equation of $K_{Z/Y}$ in $\O_W$ 
is given by the determinant of the Jacobian matrix
defining the map $\Om_{V/k}' \otimes \O_W \to \Om_{W/k}'$.
We have the commutative diagram
\[
\xymatrix{
0 \ar[r] & \Om_{\k_p/k}\otimes \O_W \ar@{=}[d] \ar[r] 
& \Om_{V/k}'\otimes\O_W \ar[d] \ar[r] & \Om_{V/\k_p}'\otimes\O_W \ar[r]\ar[d] & 0 \\
0 \ar[r] & \Om_{\k_p/k}\otimes \O_W \ar[r] & \Om_{W/k}' \ar[r] & \Om_{W/\k_p}' \ar[r] & 0,
}
\]
where the rows are exact by Proposition~\ref{p:Om_F/E} and the fact
that $\Om_{V/\k_p}'$ is locally free. 
From the diagram, we see that $K_{Z/Y}$ is locally defined by the equation 
of the map $\Om_{V/\k_p}' \otimes \O_W \to \Om_{W/\k_p}'$.
Since the extension $\k_q/\k_p$ is finite, we have $\Om_{\k_q/\k_p} \otimes \O_W \cong \O_W$, 
and therefore $\Om_{W/\k_p}' \cong \Om_{W/\k_q}'$ by Proposition~\ref{p:Om_F/E}.
Then the assertion follows by the following standard computation.
We fix isomorphisms $\O_V \cong \k_p[[v]]$ and $\O_W \cong \k_q[[w]]$,  
so that $\ff$ is given by the equation $v = uw^a$ where $u$ is a unit in $\O_W$ and
$a = \ord_C(\f^*E)$. 
Then $\Om_{V/\k_p}'$ is generated by 
$dv$, $\Om_{W/\k_q}'$ is generated by $dw$, and we have $dv = uaw^{a-1}dw + w^adu$, 
which shows that $\ord_C(K_{Z/Y})=a-1$.
\end{proof}

Still in the setting of Proposition~\ref{p:h}, we define 
the canonical divisor $K_{Z'}$ of $Z'$ and the relative canonical divisor $K_{Z'/Y}$ of $\f'$
to be, respectively, $h_*K_Z$ and $h_*K_{Z/Y}$. 

The next corollary follows immediately from Proposition~\ref{p:ramif}. 

\begin{cor}
\label{c:ramif}
We have $K_{Z'/Y} \sim K_{Z'} - \f'^*K_Y$. 
Moreover, if $Y$ has canonical singularities, then $K_{Z'/Y} \ge 0$.
\end{cor}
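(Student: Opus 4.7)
The plan is to deduce both assertions from Proposition~\ref{p:ramif} by pushing forward along the birational contraction $h\colon Z\to Z'$. By the very definitions given just before the statement, $K_{Z'}=h_*K_Z$ and $K_{Z'/Y}=h_*K_{Z/Y}$, so the task reduces to applying $h_*$ to the linear equivalence
\[
K_{Z/Y}\sim K_Z-\f^*K_Y
\]
provided by Proposition~\ref{p:ramif}, and to identify the terms on the right hand side correctly after push-forward. Since $h$ is a proper morphism of normal $\Q$-factorial surfaces, $h_*$ preserves linear equivalence of $\Q$-divisors, so
\[
K_{Z'/Y}=h_*K_{Z/Y}\sim h_*K_Z-h_*(\f^*K_Y)=K_{Z'}-h_*(\f^*K_Y).
\]

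The one small point that needs checking is the identity $h_*(\f^*K_Y)=\f'^*K_Y$. Here I would use the factorization $\f=\f'\circ h$ together with the projection formula. Since $K_Y$ is $\Q$-Cartier on $Y$ and $\f(Z)$ is not contained in its support, the pull-back $\f'^*K_Y$ is a well-defined $\Q$-Cartier divisor on $Z'$ (recall that $Z'$ is $\Q$-factorial, by Proposition~\ref{p:h}), and we have $\f^*K_Y=h^*(\f'^*K_Y)$. Applying $h_*$ and using $h_*h^*D=D$ for any $\Q$-Cartier divisor $D$ on $Z'$, which holds because $h$ is a proper birational morphism between normal surfaces, yields the claimed identity. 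This proves the first assertion.

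For the second assertion, assume that $Y$ has canonical singularities. Then Proposition~\ref{p:ramif} gives $K_{Z/Y}\geq 0$, and since push-forward of cycles preserves effectiveness, we obtain $K_{Z'/Y}=h_*K_{Z/Y}\geq 0$ as well. The only subtlety in the whole argument is keeping track of the $\Q$-Cartier hypothesis on $K_Y$ and the $\Q$-factoriality of $Z'$ in order to make sense of all pull-backs as $\Q$-divisors; once those are in place, the statement reduces to a formal manipulation with $h_*$, the projection formula, and the definitions of $K_{Z'}$ and $K_{Z'/Y}$.
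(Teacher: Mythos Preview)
Your proof is correct and is precisely what the paper intends: it simply says the corollary ``follows immediately from Proposition~\ref{p:ramif},'' and you have spelled out the push-forward along $h$ (using $K_{Z'}=h_*K_Z$, $K_{Z'/Y}=h_*K_{Z/Y}$, and $h_*(\f^*K_Y)=h_*h^*(\f'^*K_Y)=\f'^*K_Y$) that makes this immediate.
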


The \emph{relative canonical divisor} of $h$ is defined by
\[
K_{Z/Z'} := K_Z - h^*K_{Z'}.
\]
Note that since $K_{Z'} = h_*K_Z$, $K_{Z/Z'}$ is $h$-exceptional. 

\begin{prop}
\label{p:K_Z/Z'}
We have $K_{Z/Z'} \le 0$. 
\end{prop}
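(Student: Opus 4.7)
My plan is to apply the negativity lemma (Lemma~\ref{l:negativity}) to the $h$-exceptional divisor $K_{Z/Z'}$. Since the irreducible components of $\Ex(h)$ form a subset of those of $\Ex(g)$, the lemma will reduce the proposition to verifying $K_{Z/Z'}\.C \ge 0$ for every irreducible component $C$ of $\Ex(h)$. For such a $C$, the projection formula gives $(h^*K_{Z'})\.C = K_{Z'}\.h_*C = 0$, so $K_{Z/Z'}\.C = K_Z\.C$, and the task becomes to prove $K_Z\.C \ge 0$.

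For this, I would observe that by construction $g\colon Z\to S$ is a sequence of blow-ups of closed points, so every $h$-exceptional component $C$ is a smooth rational curve (a $\P^1$ over the residue field of the corresponding center). Adjunction on the smooth surface $Z$ then yields $K_Z\.C = -2 - C^2$, and the desired inequality becomes $C^2 \le -2$; equivalently, no $h$-exceptional component is a $(-1)$-curve on $Z$.

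The hard part, in my view the only nontrivial step, will be ruling out $(-1)$-curves inside $\Ex(h)$. I would argue by contradiction, exploiting the minimality clause in Proposition~\ref{p:h}. Assume there exists $C \subset \Ex(h)$ with $C^2 = -1$. Castelnuovo's contractibility criterion then produces a birational morphism $c\colon Z \to Z''$ to a smooth surface contracting $C$ to a point; since $h$ itself contracts $C$, the universal property of the contraction yields a morphism $h''\colon Z''\to Z'$ with $h = h''\circ c$. On the other hand, writing the blow-up tower as $Z = Z_n \to Z_{n-1} \to \cdots \to Z_0 = S$, the condition $C^2 = -1$ forces $C$ to be the strict transform of an exceptional divisor created at some step $k$ after which no subsequent blow-up is centered on $C$. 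One could then omit that step and realize $Z''$ as the result of a strictly shorter sequence of blow-ups of closed points on $S$ through which $h$ still factors via $h''$, contradicting the minimality of $g$. This would complete the argument.
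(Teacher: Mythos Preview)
Your approach coincides with the paper's: reduce via Lemma~\ref{l:negativity} to showing $K_Z\.C\ge 0$ for each $h$-exceptional component $C$ (using the projection formula to pass from $K_{Z/Z'}$ to $K_Z$), then invoke adjunction and the minimality of $g$ to rule out $(-1)$-curves among the $h$-exceptional components. One numerical point needs care: since $K$ is in general not algebraically closed once $\dim X\ge 3$, the successive blow-up centers may have residue fields $L\supsetneq K$, so the corresponding exceptional curve satisfies $C\cong\P^1_L$ with $d=[L:K]$ possibly larger than $1$. Under the convention $D\.C=\deg(\O_Z(D)|_C)$ used in the paper (degrees computed over $K$), adjunction reads $K_Z\.C=-2d-C^2$, and the Castelnuovo-contractible case is $C^2=-d$ rather than $C^2=-1$. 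Your reordering-of-blow-ups argument goes through unchanged once these numbers are adjusted.

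The paper sidesteps this bookkeeping by base-changing to $\ov K$: it works on the finite-type surface $Z^\o_{\ov K}$, where $C$ breaks into a Galois orbit of curves $D_1,\dots,D_m$, and observes that if some $D_i$ satisfied $K_{Z^\o_{\ov K}}\.D_i<0$ it would be an honest $(-1)$-curve, contradicting the minimality of the Galois-equivariant blow-up sequence. The two routes are equivalent; the base change simply places one over an algebraically closed field where the classical numerics apply without modification, while your version carries the residue-degree factors through directly.
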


\begin{proof}
By Lemma~\ref{l:negativity}, it suffices to show that $K_{Z/Z'}$ is $h$-nef.
This property is well-known to hold, under similar assumptions, for surfaces
of finite type over an algebraically closed field. We reduce to that case as follows
(we keep the notation as in the proof of Proposition~\ref{p:h}).

Let $\ov K$ be the algebraic closure of $K$, and let $G$ be the Galois group.  
The surface $Z^\o_{\ov K}$ is obtained from $S^\o_{\ov K}$ by a minimal sequence of blow-ups
of $G$-orbits of smooth closed points such that the induced rational map
$h^\o_{\ov K} \colon Z^\o_{\ov K}\rat (Z')^\o_{\ov K}$ is a morphism. 
Any irreducible component $C$ of $\Ex(g)$ is the pull-back of 
a curve $C^\o$ to $Z^\o$, whose pull-back to $Z^\o_{\ov K}$ is a union of 
finitely many disjoint curves $D_1,\dots,D_m$ forming an orbit under 
the action of $G$. Moreover, $K_{Z^\o_{\ov K}}$ and $K_Z$ are the pull-backs
(under the respective maps) of $K_{Z^\o}$ (cf.\ Remark~\ref{r:Om_Z^o}).
If $C$ is $h$-exceptional, then each $D_i$ is $(h^\o_{\ov K})$-exceptional.
Therefore we have $K_{Z^\o_{\ov K}} \. D_i \ge 0$
by the minimality of the sequence of $G$-equivariant blow-ups, 
since otherwise the adjunction formula would imply that $D_i$ is a $(-1)$-curve.  
This implies that $K_Z\.C \ge 0$, and hence $K_{Z/Z'}\.C \ge 0$. 
\end{proof}

\section{Proof of Theorem~\ref{t:main}}

Let $X$ be a variety over $k$. 
Theorem~\ref{t:main} is well-known to hold in dimension one (cf.\ Remark~\ref{r:dim-1}).
We can then assume that $\dim X \ge 2$.

Suppose that $\n$ is a terminal valuation over $X$, as defined in the introduction.  
This means that there is a minimal model $f \colon Y \to X$, and a prime exceptional divisor $E$
on $Y$, such that $\n = \ord_E$. Note that $E \subset f^{-1}(X_\sing)$.
We can assume without loss of generality that $Y$ is $\Q$-factorial
(cf.\ \S\ref{s:Q-fact}).
Let $\~\a \in Y_\infty$ be the generic point of the irreducible 
component of $\p_Y^{-1}(E)$ dominating $E$, 
and let $\a = f_\infty(\~\a) \in X_\infty$. Then $\a \in \p_X^{-1}(X_\sing)$
and $\n = \ord_\a$.  

The valuation $\n$ is a Nash valuation over $X$ if and only if $\a$ is 
the generic point of an irreducible component of $\p_X^{-1}(X_\sing)$.
We suppose by way of contradiction that this is not the case. 
Then we are in the setting of Theorem~\ref{t:csl}.

Let $p \in E$ be a very general point of codimension one. 
By Theorem~\ref{t:csl}, there is a finite algebraic extension $K/\k_p$, and a wedge
\[
\Phi \colon  \Spec K[[s,t]] \to X
\]
that does not lift to $Y$, such that the lift
$\~\Phi_0$ of the special arc $\Phi_0$ 
is an arc on $Y$ with order of contact one with $E$ at $p$.

Let
\[
\xymatrix{
Z \ar@/_1pc/[rdd]_g \ar[rd]^h\ar@/^1pc/[rrd]^\f &&\\
& Z' \ar[d]_{g'}\ar[r]^{\f'} & Y \ar[d]^f \\
& S \ar[r]^\Phi & X 
}
\]
be the diagram given in Proposition~\ref{p:h}.
Since the wedge $\Phi$ does not lift to $Y$, $g'$ is not an isomorphism, 
and therefore $\Ex(g') \ne \emptyset$.
Let $G$ be the irreducible component of $\Ex(g)$ intersecting the
proper transform $T$ of the $t$-axis $(s=0) \subset S$.

\begin{lem}
\label{l:C-dominates-E}
Every irreducible component of $\Ex(g')$ containing $h(G)$ dominates $E$.
\end{lem}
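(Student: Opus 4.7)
The approach is a case analysis on whether $G$ is contracted by $h$ (equivalently, by $\f$, via Lemma~\ref{l:contracted}), combined with a transcendence degree bound and a generization analysis at $p$. The key preliminary is the following consequence of the very general choice of $p$: the only prime divisor of $Y$ containing the closure $\overline{\{p\}}$ is $E$ itself. Indeed, for any other prime divisor $D \neq E$, the condition $\overline{\{p\}} \subseteq D$ forces $\overline{\{p\}} \subseteq D \cap E$, a proper closed subvariety of $E$; a very general $p$ avoids the countably many such conditions. Since $\O_{Y,p}$ is a two-dimensional normal local ring whose height-one primes correspond to prime Weil divisors through $p$, this means the only scheme-theoretic generizations of $p$ in $Y$ are $p$, the generic point of $E$, and the generic point of $Y$. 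Moreover, because $K$ has transcendence degree $n-2$ over $k$, the generic point of any irreducible component of $\Ex(g)$ or of $\Ex(g')$ has residue field of transcendence degree $n-1$ over $k$, so it cannot map under $\f$ or $\f'$ to the generic point of $Y$.

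Suppose first that $G$ is not contracted by $h$. Then $h(G)$ is an irreducible closed curve in $Z'$; since $G \subseteq \Ex(g) = \Ex(h) \cup h^{-1}(\Ex(g'))$ and $G \not\subseteq \Ex(h)$, the irreducibility of $G$ forces $G \subseteq h^{-1}(\Ex(g'))$, so $h(G) \subseteq \Ex(g')$. Being an irreducible curve inside $\Ex(g')$ (whose components are curves), $h(G)$ is itself an irreducible component; hence the unique component of $\Ex(g')$ containing $h(G)$ is $C = h(G)$, and it suffices to show that the generic point $g_0$ of $G$ maps under $\f$ to the generic point of $E$. Since $g_0$ specializes to the closed point $q = T \cap G$ and $\f(q) = p$, the image $\f(g_0)$ is a generization of $p$; by the preliminary it must be $p$, the generic point of $E$, or the generic point of $Y$. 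Transcendence degree rules out the generic point of $Y$, and Lemma~\ref{l:contracted}(c) applied to the closed point $q$ rules out $p$ (otherwise $G$ would be contracted). Hence $\f(g_0)$ is the generic point of $E$.

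Now suppose $G$ is contracted by $h$. Then $h(G) = \{r\}$ and $\f(G) = \{p\}$, so $\f'(r) = p$. Every irreducible component $C$ of $\Ex(g')$ through $r$ is of the form $C = h(\~C)$ for a unique irreducible component $\~C$ of $\Ex(g)$ not contracted by $h$---concretely, $\~C$ is the closure in $Z$ of $h^{-1}(C \setminus h(\Ex(h)))$. Since $r \in C = h(\~C)$, we may pick a point $q' \in \~C \cap h^{-1}(r)$; then $\f(q') = \f'(h(q')) = \f'(r) = p$, and $q'$ is a closed point of $\~C$ (lying in the $0$-dimensional intersection of $\~C$ with the contracted part of $h^{-1}(r)$). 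Running the same generization-and-trdeg analysis on the generic point $c_0$ of $\~C$---and invoking Lemma~\ref{l:contracted}(c) with $q'$ to rule out $\f(c_0) = p$---we conclude that $\f(c_0)$ is the generic point of $E$, so $\~C$, and hence $C = h(\~C)$, dominates $E$.

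The main obstacle is establishing the generization characterization in the first paragraph: ruling out every prime divisor of $Y$ other than $E$ that could contain $\overline{\{p\}}$ requires a careful use of the very general hypothesis on $p$. Once that is in hand, the rest is a routine application of the transcendence degree bound together with Lemma~\ref{l:contracted}.
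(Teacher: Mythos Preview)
Your preliminary claim is false, and this is a genuine gap, not merely a matter of ``careful use'' of the hypothesis. At a codimension-two point $p$ of a normal variety $Y$, the local ring $\O_{Y,p}$ is a two-dimensional normal local domain, and such a ring has \emph{infinitely} many height-one primes (for instance, if $Y$ is smooth at $p$ the local ring is regular of dimension two and the primes $(x-\lambda y)$ are all distinct). Equivalently, there are uncountably many prime divisors of $Y$ through $p$, so no very general choice of $p$ can exclude them all. Your statement that ``the only scheme-theoretic generizations of $p$ in $Y$ are $p$, the generic point of $E$, and the generic point of $Y$'' is therefore simply wrong, and both of your case analyses rest on it.

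What is missing is the observation that $\f'(C)$ is contracted by $f$: since $C\subset\Ex(g')$ we have $g'(C)$ equal to the closed point of $S$, and then $f(\f'(C))=\Phi(g'(C))$ is a single point of $X$. Hence the irreducible closure $\overline{\f'(C)}$ is contained in $\Ex(f)$, a closed subset of $Y$ with only \emph{finitely many} irreducible components. Now the very general condition on $p$ is finitary: $p$ lies in $E$ and in no other component of $\Ex(f)$, so $\overline{\f'(C)}\subset E$. This is exactly the paper's route. From there your use of Lemma~\ref{l:contracted} to rule out $\f'(C)=\{p\}$ is correct, and the codimension-one condition on $p$ in $E$ then forces $\overline{\f'(C)}=E$. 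Once you make this correction, your case split on whether $G$ is $h$-contracted becomes unnecessary: the argument applies uniformly to any component $C$ of $\Ex(g')$ containing $h(G)$, since $p\in\f'(C)$ follows directly from $p\in\f(G)$ and $h(G)\subset C$.
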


\begin{proof}
By construction, $p \in \f(G) \subset E$.
Recall that $p$ has codimension one in $E$, and is not contained in any other 
irreducible component of $\Ex(f)$. 
Let $C$ be any irreducible component of $\Ex(g')$ containing 
$h(G)$. Note that $p \in \f'(C)$. Since $C$ is irreducible and is contracted by $f\o\f'$, 
and $p$ is a very general point of $E$, we have $\f'(C) \subset E$. 
On the other hand, Lemma~\ref{l:contracted} implies that
$\f'(C) \ne p$. Therefore, as $p$ has codimension one in $E$, $C$ must dominate $E$.
\end{proof}

\begin{lem}
\label{l:dominant}
The morphism $\f$ is dominant.
\end{lem}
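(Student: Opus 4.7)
The plan is to reduce dominance of $\varphi$ to dominance of the wedge $\Phi \colon S \to X$, and then to derive the latter from the structure of the wedge provided by Reguera's curve selection lemma. First, since $f$ and $g$ are both proper birational (the former by hypothesis and the latter by Proposition~\ref{p:h}), each sends its unique generic point to the generic point of the target. The commutativity $f \circ \varphi = \Phi \circ g$ then yields $f(\varphi(\eta_Z)) = \Phi(\eta_S)$, and since $f^{-1}(\eta_X) = \{\eta_Y\}$, the equality $\varphi(\eta_Z) = \eta_Y$ is equivalent to $\Phi(\eta_S) = \eta_X$. Thus $\varphi$ is dominant if and only if $\Phi$ is.

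Next, I would identify dominance of $\Phi$ with dominance of the generic arc $\Phi_\varepsilon \colon \Spec K((s))[[t]] \to X$. The injective ring map $K[[s,t]] \hookrightarrow K((s))[[t]]$ pulls back the zero ideal to itself, so on spectra the generic point goes to $\eta_S$; hence $\Phi(\eta_S)$ equals the image of the generic point of $\Spec K((s))[[t]]$ under $\Phi_\varepsilon$, and dominance of $\Phi$ is equivalent to dominance of $\Phi_\varepsilon$. To verify the latter, I would invoke Reguera's curve selection lemma as it is used in the construction of the wedge (underlying Theorem~\ref{t:csl} and detailed in the Appendix): this produces $\Phi_\varepsilon$ as a $K((s))$-valued point of $X_\infty$ whose image is the generic point of an irreducible component $N$ of $\pi_X^{-1}(X_\sing)$ properly containing the closure of $\alpha$. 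Choosing a resolution $f' \colon Y' \to X$ on which the corresponding divisorial valuation is realized by a prime divisor $F$, one has $N = \overline{f'_\infty(\pi_{Y'}^{-1}(F))}$, with generic point $f'_\infty(\tilde\beta)$ for $\tilde\beta$ the generic point of $\pi_{Y'}^{-1}(F)$. Since $Y'$ is smooth and $\pi_{Y'}^{-1}(F)$ is irreducible, $\tilde\beta$ represents a generic arc through the generic point of $F$, which is a dominant arc of $Y'$; pushing forward via the birational $f'$ preserves dominance, and base change to $K((s))$ does the same. Hence $\Phi_\varepsilon$ is dominant.

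The delicate point will be the last step: the identification of $\Phi_\varepsilon$ with the generic point of a component of $\pi_X^{-1}(X_\sing)$. The statement of Theorem~\ref{t:csl} records only non-liftability of $\Phi$ and transversality of $\tilde\Phi_0$ with $E$ at $p$, whereas this identification is inherent in the specialization procedure from \cite{Reg06}; extracting it cleanly from the construction of the wedge given in the Appendix is where the real work lies.
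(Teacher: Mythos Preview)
Your reductions in the first paragraph are correct and clean: dominance of $\varphi$ is equivalent to dominance of $\Phi$, which in turn is equivalent to dominance of the generic arc $\Phi_\varepsilon$. The problem lies entirely in the last step, and it is a genuine gap, not just a matter of bookkeeping.

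Neither Reguera's curve selection lemma nor the specialization procedure in the Appendix guarantees that the generic arc hits the generic point of a component of $\pi_X^{-1}(X_\sing)$. Theorem~\ref{t:Reg-csl} only asserts $\Psi(\varepsilon)\in M\smallsetminus N_E$; the image is some point of $M$, typically of much smaller ``codimension'' than the generic point, exactly as in the ordinary curve selection lemma on a Noetherian scheme (an arc through the origin of $\A^2$ does not hit the generic point of $\A^2$). After the specialization in Theorem~\ref{t:specialize-csl}, the situation is worse: that theorem tracks only $\Phi(0)=\beta$ and the equivalence of liftability, and says nothing whatsoever about the location of $\Phi(\varepsilon)$. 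So your assertion that ``this identification is inherent in the specialization procedure'' is not supported by the construction; there is no reason for the arc $\Phi_\varepsilon$ to be dominant on $X$ coming from this direction, and I do not see how to extract dominance from Theorem~\ref{t:csl} alone by chasing the generic arc.

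The paper's proof avoids this issue by working on the target side instead of the source side. It never analyzes $\Phi_\varepsilon$; rather, it bounds $\overline{\varphi(Z)}$ from below and above using $E$. Lemma~\ref{l:C-dominates-E} (which relies on $p$ being a very general codimension-one point of $E$) shows that some component of $\Ex(g')$ through $h(G)$ dominates $E$, so $E\subset\overline{\varphi(Z)}$. On the other hand, the transversality $\ord_{\~\Phi_0}(E)=1$ forces $\overline{\varphi(Z)}\not\subset E$. Since $\overline{\varphi(Z)}$ is irreducible and $E$ has codimension one in $Y$, this pinches $\overline{\varphi(Z)}$ to all of $Y$. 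Note that this argument uses only the two properties that Theorem~\ref{t:csl} actually records (non-liftability, giving $\Ex(g')\ne\emptyset$, and transversality of $\~\Phi_0$), together with Lemma~\ref{l:C-dominates-E}; it does not require any control over $\Phi_\varepsilon$.
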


\begin{proof}
Let $\ov{\f(Z)} \subset Y$ be the Zariski closure of $\f(Z)$. 
Note that $\ov{\f(Z)}$ is irreducible, since $Z$ is irreducible.  
By Lemma~\ref{l:C-dominates-E}, we see that $E \subset \ov{\f'(Z')} = \ov{\f(Z)}$. 
On the other hand, 
since the lift $\~\Phi_0$ of the special arc has finite order of contact with $E$ at $p$, we 
have $\ov{\f(Z)} \not\subset E$. As $E$ has codimension one in $Y$, 
we conclude that $\ov{\f(Z)} = Y$. 
\end{proof}

Recall that, by construction, $K$ has transcendence degree $n-2$ over $k$. 
Then we are in the setting of \S\ref{s:ramif}, 
and so the relative canonical divisors $K_{Z/Y}$ and
$K_{Z'/Y}$ are defined.

The completion of the proof will result from a comparison between
the coefficient of $G$ in the relative canonical divisor $K_{Z/S}$, and its coefficient in $\f^*E$. 
The relative canonical divisors of $\f$ and $\f'$ will be used to link these two coefficients.

We start from $K_{Z/S}$.
Since $S$ is smooth and $G$ is $g$-exceptional, we have
\begin{equation}
\label{eq:0}
\ord_G(K_{Z/S}) \ge 1.
\end{equation}

Recall that $K_{Z/S} = K_{Z/Z'} + h^*K_{Z'/S}$. Since $K_{Z/Z'} \le 0$
by Proposition~\ref{p:K_Z/Z'}, we have
\begin{equation}
\label{eq:2}
K_{Z/S} \le h^*K_{Z'/S}.
\end{equation}

By Proposition~\ref{p:ramif} (see also Corollary~\ref{c:ramif})
and the fact that $K_S \sim 0$, we have
\[
\f'^*K_Y \sim K_{Z'} - K_{Z'/Y} \sim K_{Z'/S} - K_{Z'/Y}. 
\]
We decompose $K_{Z'/Y} = K_{Z'/Y}^\ver + K_{Z'/Y}^\hor$
where every component of $K_{Z'/Y}^\ver$ is $g'$-exceptional, and none of 
the components of $K_{Z'/Y}^\hor$ is. 
As $Y$ has terminal singularities, we have $K_{Z/Y} \ge 0$ by 
Proposition~\ref{p:ramif}, and since $K_{Z'/Y} = h_*K_{Z/Y}$, 
this implies that $K_{Z'/Y}^\hor \ge 0$.
Therefore $K_{Z'/Y}^\hor$, being effective and not containing any $g'$-exceptional curve, 
is $g'$-nef. Note that $\f'^*K_Y$ is also $g'$-nef, because we are assuming that $K_Y$ is $f$-nef.
It follows that $\f'^*K_Y + K_{Z'/Y}^\hor$ is $g'$-nef. 
Notice that 
\[
\f'^*K_Y + K_{Z'/Y}^\hor \sim K_{Z'/S} - K_{Z'/Y}^\ver,
\]
and the $\Q$-divisor on the right hand side is $g'$-exceptional.
Then Lemma~\ref{l:negativity} implies that
\begin{equation}
\label{eq:3}
K_{Z'/S} \le K_{Z'/Y}^\ver.
\end{equation}
 
Since $g'$ is the blow-up of an ideal cosupported at the closed point of $S$, 
the fiber of $g'$ over this point has pure dimension one, 
and its support is equal to $\Ex(g')$. As $G$ lies over the closed point of $S$, 
it follows that $h(G) \in \Ex(g')$.  
Let $C$ be any irreducible component of $\Ex(g')$ containing $h(G)$. 
By Lemma~\ref{l:C-dominates-E}, $C$ dominates $E$.
Then, by Proposition~\ref{p:hurwitz}, we have
\[
\ord_C(K_{Z'/Y}) = \ord_C(\f'^*E) - 1
\]
(this computation 
can be carried out on $Z$). Note that there exists at least one such component $C$ 
because $g'$ is not an isomorphism (if $G$ is not $h$-exceptional, then $C = h(G)$). 
Since $K_{Z'/Y}^\ver$ is supported on $\Ex(g')$ and $\f'^*E$ is effective,
the previous formula implies the
divisor $\f'^*E - K_{Z'/Y}^\ver$ is effective and nontrival in a neighborhood of the
generic point of $h(G)$. Since $Z'$ is $\Q$-factorial, it follows
that $\f^*E - h^*K_{Z'/Y}^\ver = h^*(\f'^*E - K_{Z'/Y}^\ver)$ is an effective and nontrivial
$\Q$-divisor in a neighborhood of the generic point of $G$. This means that
\begin{equation}
\label{eq:4}
\ord_G(h^*K_{Z'/Y}^\ver) < \ord_G(\f^*E).
\end{equation}

Finally, notice that the special arc $\~\Phi_0 \colon \Spec K[[t]] \to Y$
factors through a morphism $\ff \colon \Spec K[[t]] \to Z$
which gives a parameterization of $T$. 
Recall that $\~\Phi_0$ has order of contact one with $E$.
This means that $\ord_t(\~\Phi_0^*E) = 1$. Since 
$\ord_t(\~\Phi_0^*E) \ge \ord_t(\ff^*G)\.\ord_G(\f^*E)$, 
we must have
\begin{equation}
\label{eq:1}
\ord_G(\f^*E) = 1.
\end{equation}

Putting \eqref{eq:0}--\eqref{eq:1} together, we get
\[
1 
\le \ord_G(K_{Z/S}) \le \ord_G(h^*K_{Z'/S}) \le \ord_G(h^*K_{Z'/Y}^\ver) < \ord_G(\f^*E) = 1,
\]
which gives the desired contradiction. This completes the proof of Theorem~\ref{t:main}.

\section{Terminal valuations over toric varieties}
\label{s:toric}

Here we give a combinatorial description of terminal valuations in the case of toric varieties.

\subsection{Minimal models}
\label{s:Q-fact}

In dimension $\ge 3$, minimal models are not unique. 
Given a variety $X$, any minimal model program over $X$, originating from a
projective resolution of singularities of $X$, terminates with a minimal model over $X$
\cite{BCHM10}. 
Minimal models obtained in this way are $\Q$-factorial  
and are all isomorphic in codimension one.
Every other minimal model is dominated by a $\Q$-factorial minimal model
via a small birational morphism. 

It follows that the set of terminal valuations over $X$ is equal to the set of valuations
$\ord_{E_i}$ where $E_1,\dots, E_m$ are the prime exceptional divisors
on any given minimal model over $X$.
In a way, minimal models play the role of minimal resolutions of surfaces.

\subsection{Toric varieties}

With regards to toric varieties, we use the notation and terminology of
\cite{Ful93}. We fix an algebraic torus $T$, and denote by $M$ the
character lattice of $T$. We let $N = \Hom(M,\Z)$ be the dual lattice, and
denote $M_\R = M \otimes \R$ and $N_\R = N \otimes \R$.
The toric variety corresponding to a fan $\D$ in $N$ is denoted by $X(\D)$.
We denote by $\D(i)$ the set of $i$-dimensional cones in $\D$, 
and by $\D(1)_\prim$ the set of primitive elements in the intersection of the rays of $\D$
with the lattice $N$. Note that the elements in $\D(1)_\prim$ correspond to the
prime $T$-divisors on $X(\D)$. 
If $\s \subset N_\R$ is a strongly convex rational polyhedral cone, 
then we define, in a similar fashion, $X(\s)$, $\s(i)$, and $\s(1)_\prim$
by identifying, in the notation, $\s$ with the fan defined by its faces. 

We fix a strongly convex rational polyhedral cone $\s$, and consider the toric
variety $X(\s)$. 
The elements of $\s \cap N$ are in bijection with the torus-invariant
valuations on $X(\s)$. 
As we shall see below, terminal valuations, Nash valuations, 
and essential valuations over $X(\s)$
are all torus-invariant, and can be characterized in terms of
lattice properties of $\s$. 

A cone $\t$ is said to be regular if the vectors in $\t(1)_\prim$ form part of a
basis of $N$. This is equivalent to $X(\t)$ being smooth. A cone that is
not regular is called singular. We write 
$\s_{\rm sing} = \bigcup_\t \t^\circ$, where $\t$ ranges among the singular
faces of $\s$, and $\t^\circ$ denotes the relative interior of $\t$. Then
$\s_{\rm sing} \cap N$ parameterizes torus-invariant valuations on $X(\s)$
whose center is contained in the singular locus $X(\s)_\sing$. Terminal
valuations, Nash valuations, and
essential valuations all belong to $\s_\sing\cap N$.

Given two vectors $v, v' \in \s$, we write $v \leq_\s v'$ if $v' \in v+\s$.
It is clear that $\leq_\s$ is a partial order in $\s$. In~\cite{IK03}, the authors
prove that Nash and essential valuations coincide, and characterize them in
terms of the order $\leq_\s$. To state their result, let
\[
\Min(\s) := \{\text{minimal elements of $\s_\sing \cap N$ with respect to $\leq_\s$}\}.
\]

\begin{thm}[{\cite{IK03}}]
With the above notation, 
\[
\Min(\s) = \{\text{Nash valuations over $X(\s)$}\} =
\{\text{essential valuations over $X(\s)$}\}. 
\]
\end{thm}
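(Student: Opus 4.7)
My plan is to show, separately and via different toric techniques, that both the Nash and essential valuations over $X(\s)$ consist of the torus-invariant valuations $\ord_v$ with $v$ ranging over $\Min(\s)$. The key initial reduction is that both sets consist only of torus-invariant valuations. The torus $T$ acts on $X(\s)$, on its torus-equivariant resolutions (given by regular refinements of $\s$), and by functoriality on the arc space $X(\s)_\infty$; in each case the preimage of $X(\s)_\sing$ is $T$-stable with finitely many irreducible components, so the connected group $T$ must fix each component, and the associated generic points define torus-invariant valuations. Any non-toric resolution of $X(\s)$ is dominated by an equivariant one, so the essential condition need only be checked on regular refinements. Both sets consequently reduce to valuations $\ord_v$ with primitive $v\in\s\cap N$.

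For the identification of essential valuations with $\Min(\s)$, note that $\ord_v$ is essential iff, for every regular refinement $\Sigma$ of $\s$, the unique cone $\tau_v\in\Sigma$ whose relative interior contains $v$ has no proper face $\tau_0\subsetneq\tau_v$ with $\tau_0^\circ\subseteq\s_\sing$ (equivalently, the orbit closure $V(\tau_v)$ is an irreducible component of the preimage of $X(\s)_\sing$). Assume $v\in\Min(\s)$ and suppose a proper face $\tau_0$ exists with $\tau_0^\circ\subseteq\s_\sing$; let $u_1,\dots,u_d$ be the primitive ray generators of $\tau_v$ with $u_1,\dots,u_e$ (where $e<d$) generating $\tau_0$, and expand $v=\sum a_i u_i$ with each $a_i\in\Z_{\geq 1}$ (using that $\tau_v$ is regular and $v$ is interior). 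Then $w:=u_1+\cdots+u_e$ lies in $\tau_0^\circ\cap N\subseteq\s_\sing\cap N$, the residue $v-w\in\tau_v\cap N$ is nonzero (since $a_{e+1}\geq 1$), and $w\neq v$ (since $a_d\geq 1$), contradicting the minimality of $v$. Conversely, if $v\notin\Min(\s)$, pick a primitive $v'\in\s_\sing\cap N$ with $v'<_\s v$ and $w'':=$ primitive on the ray through $v-v'$; I would construct a regular refinement $\Sigma$ containing $\R_{\geq 0}v'$ and $\R_{\geq 0}w''$ as rays, arranged so that the cone $\tau_v$ containing $v$ has $\R_{\geq 0}v'$ as a face. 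Then $V(\R_{\geq 0}v')$ is a torus divisor in the preimage of $X(\s)_\sing$ properly containing $V(\tau_v)$, so $V(\tau_v)$ is not an irreducible component and $\ord_v$ fails to be essential. Such a $\Sigma$ is obtained by star subdivision of $\s$ along $v'$ and $w''$ followed by regularization chosen to preserve the face structure around $v$ while not introducing $v$ as a new ray.

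For the identification of Nash valuations with $\Min(\s)$, I would use the decomposition of $X(\s)_\infty$ by contact loci. For each $v\in\s\cap N$ the subset $C_v\subseteq X(\s)_\infty$ of arcs $\a$ satisfying $\ord_t(\a^*\chi^m)=\langle v,m\rangle$ for all $m\in\s^\vee\cap M$ (where $\chi^m$ denotes the torus character) is a locally closed irreducible $T$-stable subset, and a direct coordinate computation shows that $\ov{C_v}\supseteq\ov{C_{v''}}$ iff $v\leq_\s v''$. Consequently the decomposition $\p_X^{-1}(X(\s)_\sing)=\bigcup_{v\in\s_\sing\cap N}\ov{C_v}$ has irreducible components indexed precisely by $v\in\Min(\s)$, and the corresponding Nash valuations are $\ord_v$, completing the identification.

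The main obstacle is the combinatorial construction in the reverse direction of step two: producing a regular refinement of $\s$ in which a prescribed primitive vector $v'$ becomes a face of the cone containing $v$, while avoiding $v$ as a new ray and preserving regularity, requires delicate control of the subdivision process. A secondary technical point is justifying the stratification of $X(\s)_\infty$ by contact loci and verifying the closure relations in the highly non-Noetherian arc space.
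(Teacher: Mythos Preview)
The paper does not prove this statement; it is quoted from \cite{IK03} without argument, so there is no proof in the paper to compare against.

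Your outline is essentially the strategy of \cite{IK03}: reduce to torus-invariant valuations via the connected $T$-action, identify the Nash components with the closures $\overline{C_v}$ of the contact-order strata, and match the inclusion order on these closures with $\le_\s$. Your forward direction for the essential side (if $v\in\Min(\s)$ then $V(\tau_v)$ is a component in every regular refinement) is correct and cleanly argued.

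Both gaps you flag are genuine and are precisely where the work in \cite{IK03} lies. The closure relation $\overline{C_v}\supseteq \overline{C_{v''}}\Leftrightarrow v\le_\s v''$ requires, in the nontrivial direction, an explicit one-parameter degeneration of arcs written out in toric monomial coordinates; this is routine once set up. The more delicate point is your reverse direction for essential valuations: producing a regular refinement in which $\R_{\ge 0}v'$ is a face of the cone through $v$ while $v$ itself does not appear as a ray. Your recipe ``star subdivide at $v'$ and $w''$, then regularize while preserving the incidence'' is the right shape, but a generic regularization may well introduce $v$ as a ray or destroy the desired face relation, and controlling this is exactly the combinatorial content you are deferring. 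One logical simplification: since Nash valuations are always essential, once you have $\Min(\s)=\{\text{Nash}\}$ and $\{\text{essential}\}\subseteq\Min(\s)$ you are done; your forward direction in step two, showing $\Min(\s)\subseteq\{\text{essential}\}$, is then redundant (and with it the need for the claim that every resolution is dominated by an equivariant one).
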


We denote by
$\G(\s) \subset N_\R$ the convex hull of the non-zero elements in $\s
\cap N$, and let $\de_c\G(\s)$ be the union of the compact faces of $\G(\s)$.
Then let
\[
\Ter(\s) := \de_c\G(\s) \cap \s_\sing \cap N.
\]

\begin{prop}
With the above notation, 
\[
\Ter(\s) = \{\text{terminal valuations over $X(\s)$}\}.
\]
\end{prop}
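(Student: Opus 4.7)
The plan is to exhibit an explicit $\Q$-factorial toric minimal model $f \colon Y = X(\D) \to X(\s)$ whose prime exceptional divisors correspond bijectively with $\Ter(\s)$. Combined with the discussion in \S\ref{s:Q-fact}---any two $\Q$-factorial minimal models are isomorphic in codimension one and every other minimal model is dominated by one of these via a small birational morphism---the set of terminal valuations over $X(\s)$ coincides with the valuations defined by these divisors, which proves the proposition.

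For the construction, first consider the fan whose cones are $\R_{\ge 0}\cdot F$ with $F$ ranging over the faces of $\de_c\G(\s)$; this is clearly a refinement of $\s$. Then pass to a simplicial refinement $\D$ whose set of primitive ray generators is exactly $\de_c\G(\s)\cap N$, obtained by triangulating each compact face of $\G(\s)$ using all of its lattice points as vertices, with the further requirement that each simplex be lattice-empty (i.e., contain no lattice points beyond its vertices). Such a ``full'' lattice-empty triangulation always exists, e.g.\ by inserting lattice points greedily.

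To verify that $Y = X(\D)$ is a minimal model, let $\psi$ be the piecewise linear function on $\D$ taking the value $1$ on every primitive ray generator. Since $\psi = 1$ cuts out $\de_c\G(\s)$ and $\psi$ is homogeneous of degree one, the super-level set $\{\psi\ge 1\}$ equals the convex set $\G(\s)$, forcing $\psi$ to be concave on $\s$; by the standard toric dictionary between piecewise linear functions and $\Q$-divisors this is equivalent to $K_Y$ being relatively nef over $X(\s)$. Simpliciality of $\D$ yields $\Q$-factoriality. For terminal singularities, note that on each maximal cone $\eta$ with primitive generators $w_1,\dots,w_d$, any nonzero lattice point $v \in \eta$ lies in $\G(\s)$, so $\psi(v) \ge 1$, with equality precisely when $v \in \mathrm{conv}(w_1,\dots,w_d)$; the lattice-empty property then supplies the toric criterion for terminal singularities.

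It remains to observe that the prime exceptional divisors of $f$ correspond to the rays of $\D$ not already in $\s(1)$, i.e., to the lattice points of $\de_c\G(\s)$ other than the elements of $\s(1)_\prim$; and a short verification---on any smooth face $\t$ of $\s$ the only lattice points of $\de_c\G(\s)\cap\t$ are the primitive generators of the one-dimensional subfaces of $\t$---identifies this set with $\de_c\G(\s)\cap\s_\sing\cap N = \Ter(\s)$. The main technical obstacle is the translation of the convexity of $\G(\s)$ into relative nefness of $K_Y$ via the concavity of $\psi$; by contrast, the combinatorial construction of a full lattice-empty triangulation is routine.
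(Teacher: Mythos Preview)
Your argument is correct and follows essentially the same route as the paper: build a $\Q$-factorial toric model by triangulating $\de_c\G(\s)$ with vertex set $\de_c\G(\s)\cap N$, deduce relative nefness of the canonical class from the convexity of $\G(\s)$, and match the new rays with $\Ter(\s)$. The only differences are cosmetic---you phrase nefness via concavity of the support function where the paper invokes the curve-intersection computation from \cite[p.~99]{Ful93}, and you spell out the terminal-singularities verification (via the lattice-empty condition and the inequality $\psi\ge 1$ on $\G(\s)$) that the paper leaves implicit.
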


\begin{proof}
Let $\D$ be a simplicial subdivision of $\s$ constructed as follows. 
For every maximal dimensional face $F$ of $\de_c\G(\s)$, we choose a
triangulation with set of vertices equal to $F \cap N$. 
Then we take $\D$ to be the cone over the resulting triangulation of $\de_c\G(\s)$. 
Since $\D$ is simplicial, the corresponding toric variety $X(\D)$ is $\Q$-factorial. 

Note that 
\[
\D(1)_\prim = \de_c\G(\s) \cap N.
\]
Let $v_1,\dots,v_m$ be the elements in this set. 
Each vector $v_i$ corresponds to a prime $T$-divisor $D_i$ on $X(\D)$, 
and $X(\D)$ has canonical class
$K_{X(\D)} \sim - \sum_iD_i$.
If $n = \dim X(\s)$, then the elements in $\D(n-1) \smallsetminus \s(n-1)$
are in one-to-one correspondence with toric invariant curves that are 
exceptional over $X(\s)$.
These curves generate the space of numerical classes of 1-cycles of $X(\D)$ over $X(\s)$. 
By the convexity of $\G(\s)$, it follows by a standard computation that
$K_{X(\D)}\.\g \ge 0$
for any such curve $\g$ (e.g., see the first exercise discussed in \cite[Page~99]{Ful93}).
This implies that $X(\D)$ is a minimal model over $X(\s)$. 

Since $X(\D)$ is a minimal model, the set of terminal valuations over $X(\s)$
coincides with the set of valuations defined by the prime exceptional divisors
of $X(\D)$ lying over the singular locus of $X(\D)$. 
These exceptional divisors are parametrized by the elements in $\D(1)_\prim \cap \s_\sing$.
By construction, this set is equal to $\Ter(\s)$. 
\end{proof}

It is clear from the toric description that $\Ter(\s) \subset \Min(\s)$.
As the following example shows, these two sets can be different. 

\begin{eg}
Let $\s$ be the cone in $\R^3$ spanned by
the vectors $v_1=(1,0,0)$, $v_2=(0,1,0)$, and $v_3=(1,1,2)$. 
Then $v = (1,1,1) \in \Min(\s) \smallsetminus \Ter(\s)$, 
and thus it gives a Nash valuation on $X(\s)$ which is not a terminal valuation.
Notice, in fact, that $X(\s)$ has terminal singularities, and so it has no terminal valuations.
\end{eg}

\subsection{Minimal valuations}

Given any variety $X$, 
consider the partial order $\le_X$ among divisorial valuations on $X$
given by declaring $\n \le_X \n'$ whenever, denoting by $p \in X$ the center of 
$\n'$, one has $0 \le \n(h) \le \n'(h)$ for all $h \in \O_{X,p}$. 
Note that $\le_X$ is the same as the order $\le_\s$
defined above when $X = X(\s)$. 
The order $\le_X$ depends on the model $X$. 
We say that a divisorial valuation $\n$ on $X$ is \emph{minimal} over $X$
if it is a minimal element, among divisorial valuations centered in the
singular locus of X, with respect to the partial order $\le_X$.

A semi-continuity property of arc valuations implies that every minimal valuation
over $X$ is a Nash valuation. 
On toric varieties, minimal valuations account for all Nash valuations, 
but this is not true in general. For instance, there is a unique minimal valuation over
an $E_8$ singularity, but there are eight Nash valuations over this singularity.

In a more general setting, every divisorial valuation $\n$ on a variety $X$ defines in a natural way
a set $C_X(\n)$ in the arc space $X_\infty$, called the \emph{maximal divisorial set}
of $\n$ (for instance, the set $N_E$ considered in \S\ref{s:outline}
is the same as the maximal divisorial set of the valuation $\ord_E$).
The problem of comparing the order $\le_X$ to the order 
given by inclusions between maximal divisorial sets
has been studied by Ishii in \cite{Ish08}, 
where it is shown that the two orders are different even when $X = \A^n$. 

Toric varieties and surfaces form two large classes of varieties
for which the Nash map is known to be surjective, but this property
seems to hold for different reasons: while in the first case
Nash valuations are all minimal, but not necessarily 
terminal, just the opposite happens in the case of surfaces. 
This seems to suggests that minimal and terminal valuations complement
each other, in some way. 
We do not know any example of a Nash valuation over a variety $X$ that is 
neither terminal nor minimal over $X$.

\appendix

\section{The curve selection lemma}

Let $X$ be a variety over $k$.
A closed, irreducible subset $N \subset X_\infty$ is said to be \emph{generically stable}
if there is an affine open set $U \subset X_\infty$ such that $N \cap U$
is not empty and is cut out, set theoretically, by finitely many 
equations (i.e., its ideal in $\O_{X_\infty}(U)$ is the radical of a finitely generated ideal). 
An arc $\a \in X_\infty$ is \emph{stable} if its closure is a generically stable set. 
There are examples of generically stable sets in $X_\infty$ that are fully contained
in $(X_\sing)_\infty$. In the following, 
we will focus on generically stable sets that are not contained in $(X_\sing)_\infty$.

If $E$ is a prime divisor on a resolution of singularities $f \colon Y \to X$, 
then the subset $N_E \subset X_\infty$ given by the closure of $f_\infty(\p_Y^{-1}(E))$
is a generically stable set and is not contained in $(X_\sing)_\infty$. 
In particular, this applies to the irreducible components of $\p_X^{-1}(X_\sing)$. 

If $\a \in X_\infty$ is a stable point that is not contained in $(X_\sing)_\infty$, 
then the completion $\^{\O_{X_\infty,\a}}$ of the local ring of $X_\infty$ at $\a$
is Noetherian (cf.~\cite[Corollary~4.6]{Reg06}). 
Using this fact, Reguera proved the following form of the curve selection lemma
for generically stable sets in arc spaces.

\begin{thm}[\protect{\cite[Corollary~4.8]{Reg06}}]
\label{t:Reg-csl}
Let $N \subsetneq M$ be the inclusion of two irreducible closed sets in $X_\infty$, and assume that
$N$ is generically stable and is not contained in 
$(X_\sing)_\infty$. Let $\a \in N$ be the generic point, and
let $\kappa_\a$ be its residue field.
Then there exist a finite algebraic extension $L/\k_\a$,
and a morphism
$\Psi \colon \Spec L[[s]] \to X_\infty$,
such that $\Psi(0) = \a$ and $\Psi(\e) \in M \smallsetminus N$.
\end{thm}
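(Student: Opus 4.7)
The plan is to reduce the statement to a classical curve selection argument inside the completed local ring $\^R := \^{\O_{X_\infty,\a}}$, using as a black box Reguera's theorem (quoted just above) that $\^R$ is Noetherian under our hypotheses on $\a$. Set $R := \O_{X_\infty,\a}$, let $\fm_\a \subset R$ be its maximal ideal (which corresponds to the generic point $\a$ of $N$), and let $\fp \subsetneq \fm_\a$ be the prime of $R$ corresponding to the generic point of $M$. A morphism $\Psi \colon \Spec L[[s]] \to X_\infty$ with $\Psi(0) = \a$ is the same datum as a local ring homomorphism $R \to L[[s]]$, and the condition $\Psi(\e) \in M \smallsetminus N$ translates into the requirement that its kernel contain $\fp$ (so the image lies in $M$) and be properly contained in $\fm_\a$ (so the image is not $\a$, hence not in $N = \ov{\{\a\}}$). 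Since $L[[s]]$ is complete, any such homomorphism factors uniquely through the canonical map $R \to \^R$, so it is enough to construct a local ring homomorphism $\^R \to L[[s]]$ whose kernel contains $\fp\^R$ and is properly contained in $\fm_{\^R}$.

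Inside the Noetherian ring $\^R$, the construction is standard. Granting the key assertion that $\fp\^R$ is not $\fm_{\^R}$-primary, pick a minimal prime $\^\fp$ over $\fp\^R$ with $\^\fp \subsetneq \fm_{\^R}$, and set $A := \^R/\^\fp$: a complete Noetherian local domain of positive dimension $d$, hence excellent. Choose a saturated chain of prime ideals of $A$ from $(0)$ to $\fm_A$ and let $\fq$ be the prime of height $d-1$ in this chain, so that $A/\fq$ is a one-dimensional complete Noetherian local domain. Its integral closure is a semi-local Dedekind domain finite over $A/\fq$; localizing at one of its maximal ideals and completing yields a discrete valuation ring isomorphic, by Cohen's structure theorem, to $L[[s]]$ for some finite algebraic extension $L$ of $\k_\a$. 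The composite $\^R \surj A \surj A/\fq \inj L[[s]]$ is a local ring homomorphism whose kernel contracts in $R$ to a prime containing $\fp$ and strictly smaller than $\fm_\a$, which produces the desired $\Psi$.

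The principal obstacle, and the genuine technical heart of Reguera's argument, is the assertion assumed above: that $\fp\^R$ is not $\fm_{\^R}$-primary. In the standard Noetherian setting this is an immediate consequence of the faithful flatness of completion, but $R$ itself is not Noetherian, and the map $R \to \^R$ is in general neither flat nor injective. Reguera's approach is to exploit the generic stability of $N$ together with the hypothesis $N \not\subset (X_\sing)_\infty$: these conditions allow one to present a defining ideal for $N$ near $\a$ by finitely many equations and, combined with the Noetherianity of $\^R$, to compare $\fp$ with its extension precisely enough to control the heights of primes of $\^R$ lying over $\fp\^R$. Once this is secured, the remainder of the argument is routine commutative algebra.
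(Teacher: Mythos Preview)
The paper does not give its own proof of this statement: it is quoted verbatim as \cite[Corollary~4.8]{Reg06} and used as a black box, with only the preceding sentence recording the key input that $\^{\O_{X_\infty,\a}}$ is Noetherian. So there is nothing in the paper to compare your argument against.

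That said, your sketch is a fair outline of how Reguera's proof goes. The translation of the problem into finding a local homomorphism $R \to L[[s]]$ with prescribed kernel, the passage to $\^R$, and the one--dimensional reduction via a chain of primes followed by normalization and Cohen's structure theorem are all standard and correct. You also correctly isolate the genuine difficulty: showing that $\fp\^R$ is not $\fm_{\^R}$-primary. Since $R$ is not Noetherian, the map $R \to \^R$ need not be flat or even injective, and the usual going-down/faithful-flatness arguments are unavailable; this is exactly where Reguera's analysis of generically stable points (finite presentation of the defining ideal on a suitable open set, together with $N \not\subset (X_\sing)_\infty$) does the real work. Your final paragraph gestures at this but does not carry it out, so what you have written is an honest reduction plus an acknowledged gap rather than a complete proof. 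If your intent was only to explain why the Noetherianity of $\^R$ makes the curve selection plausible, that comes across; if you meant it as a self-contained proof, the missing step is precisely the one you flagged.
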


\begin{rmk}
\label{r:dim-1}
The curve selection lemma can be used to give a slick proof of
the Nash problem in dimension one.
Suppose indeed that $X$ is a curve with a singularity $p \in X$, 
and let $f \colon Y \to X$ be the normalization. 
If the Nash problem were false, then we could find a wedge 
$\Psi \colon \Spec L[[s,t]] \to X$ such that the lifts of the special arc $\Psi_0$
and the generic arc $\Psi_\e$ are based at two distinct points in the fiber over $p$. 
This is however impossible, since $\Spec L[[s,t]]$ is connected.
(There are also more elementary arguments to show this.)
\end{rmk}

In order to prove Theorem~\ref{t:main}, we need to take a suitable
specialization of the wedge produced by Theorem~\ref{t:Reg-csl}. 
This can be achieved using the results of \cite[\S2.2]{LJR12} (see also
\cite[\S3.2]{FdB12}), but for the convenience of the reader, we include a
proof. 

We start with some remarks about very general points on arc spaces. Recall
that a very general point on a scheme is a point that avoids countably many
closed subsets. The existence of such points is clear for schemes of finite
type over uncountable fields, but requires some discussion in the case of
arc spaces.

\begin{lem}
\label{l:very-gen-arc}
Let $E$ be a prime divisor on a resolution of singularities $f \colon Y \to
X$, and let $p$ be a very general $\k_p$-valued point on $E$. Then there
exists a very general $\k_p$-valued point $\b$ on $N_E$. Moreover, $\b$ can
be chosen in such a way that its lift $\~\b$ to $Y_\infty$ verifies
$\~\b(0) = p$ and $\ord_{\~\b}(E) = 1$.
\end{lem}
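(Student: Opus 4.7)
The plan is to construct $\~\b \in Y_\infty(\k_p)$ directly, with $\~\b(0) = p$ and $\ord_{\~\b}(E) = 1$, and then set $\b := f_\infty(\~\b)$. Choose $p$ very generally in $E$ so that $p$ lies in the smooth locus of both $Y$ and $E$, and is disjoint from the other components of $f^{-1}(X_\sing)$; further countable avoidance conditions on $p$ will be imposed below. Pick local coordinates $u_1,\dots,u_n$ on $Y$ at $p$ with $E = V(u_1)$; then $F_p := \p_Y^{-1}(p)$ is canonically the infinite-dimensional affine space $\Spec \k_p[a_{ij}]_{1\le i\le n,\,j\ge 1}$, where the $a_{ij}$ parametrize the Taylor expansions $u_i(t) = \sum_{j\ge 1} a_{ij} t^j$, and $F_p^\circ := \{a_{11}\ne 0\}$ is the open subset of arcs transverse to $E$. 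Setting $\varrho \colon F_p^\circ \inj Y_\infty \xrightarrow{f_\infty} X_\infty$ (whose image lies in $N_E$), the problem reduces to finding a $\k_p$-point of $F_p^\circ$ lying outside every $W_j := \varrho^{-1}(Z_j)$, for any prescribed countable family $\{Z_j\}$ of proper closed subsets of $N_E$.

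The key step is to verify that each $W_j$ is a proper closed subset of $F_p^\circ$. The open set $\p_Y^{-1}(E)^\circ \subset \p_Y^{-1}(E)$ of arcs with $\ord(E)=1$ is irreducible (since $Y$ is smooth and $E$ is prime), has generic point $\~\a$, and $f_\infty(\~\a) = \a \notin Z_j$; hence $U_j := \p_Y^{-1}(E)^\circ \smallsetminus f_\infty^{-1}(Z_j)$ is non-empty and open. By the inverse-limit definition of the topology on $Y_\infty = \liminv Y_m$, $U_j$ contains a basic open neighborhood of $\~\a$ that is pulled back from an open subset $V \subset Y_m$ at some finite jet level $m$, which we may take to lie in the open locus where $u_1$ has vanishing order exactly one. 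The image of $V$ in $Y$, intersected with $E$, is then constructible and contains $\eta_E$, so it contains a non-empty Zariski-open $V_j^E \subset E$; for each $q \in V_j^E$, smoothness of $Y$ allows an $m$-jet in $V$ based at $q$ to be extended to an arc in $F_q^\circ \cap U_j$. This shows $F_q^\circ \not\subset f_\infty^{-1}(Z_j)$, and imposing the condition $p \in V_j^E$ for every $j$ on the choice of $p$ guarantees $W_j \subsetneq F_p^\circ$ for all $j$.

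Finally, each proper closed $W_j \subsetneq F_p^\circ = \Spec \k_p[a_{ij}]$ is contained in the vanishing locus of some nonzero polynomial $P_j \in \k_p[a_{ij}]$. Since $k \subset \k_p$ is uncountable and algebraically closed, $\k_p$ has uncountable transcendence degree over the countable subfield $k_0 \subset \k_p$ generated by the coefficients of the $P_j$'s; choosing $b_{ij} \in \k_p$ algebraically independent over $k_0$ (which automatically forces $b_{11}\ne 0$), one obtains $P_j(b_{ij})\ne 0$ for all $j$. The resulting arc $\~\b$ and its image $\b = f_\infty(\~\b)$ satisfy the conclusions of the lemma. The main obstacle is the descent step of the second paragraph: since $Y_\infty$ is non-Noetherian, Chevalley-style constructibility arguments are unavailable on the arc space itself, so one is forced to drop to a finite jet level via the inverse-limit topology and to exploit the smoothness of $Y$ to extend $m$-jets to full arcs.
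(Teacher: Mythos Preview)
Your proof is correct and follows a genuinely different route from the paper. Both arguments share the same architecture: reduce to showing that the ``bad'' locus over $p$ is a countable union of proper closed subsets, then invoke uncountability to avoid it. The difference is in how each half is carried out.

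For properness, the paper works entirely at finite jet level: it defines $V_{i,m} \subset \p_m^{-1}(E)$ as the maximal subset whose preimage in $Y_\infty$ lies in $\~H_i$, shows these are closed using upper-semicontinuity of fiber dimension for $\t_{m_0,m}$, and proper since $\~H_i \not\supset \p_Y^{-1}(E)$. You instead stay on the arc fiber $F_p^\circ$ and descend only once, via the inverse-limit topology, to a finite jet level; this is slightly more delicate (your $V$ is really open in the locally closed order-one locus $(\p_m^{-1}E)^\circ \subset Y_m$, not open in $Y_m$, and the image in $E$ is in fact open since $(\p_m^{-1}E)^\circ \to E$ is smooth), but the argument goes through.

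For avoidance, the paper builds $\~\b$ as a compatible sequence $(\~\b_m)$ level by level, using at each step that a fiber $\A^n_L$ over an uncountable field $L$ cannot be covered by countably many proper closed subsets. Your transcendence argument does this in one stroke on the infinite-dimensional affine space $F_p^\circ$: the countable field $k_0$ generated by the coefficients of the $P_j$, together with $|\k_p| > \aleph_0$, gives countably many algebraically independent $b_{ij}$, and hence a $\k_p$-point off every $V(P_j)$. This is arguably cleaner and makes the role of uncountability more transparent; the paper's inductive construction is more concrete and keeps everything at finite level throughout.
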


\begin{proof}
Let $\{H_i\}_{i \in \N}$ be a countable collection of hypersurfaces on
$X_\infty$ such that no $H_i$ contains $N_E$, and consider $\~H_i =
f_\infty^{-1}(H_i)$. We will denote by $\p_m \colon Y_m \to Y$, $\t_m
\colon Y_\infty \to Y_m$ and $\t_{m',m} \colon Y_{m'} \to Y_m$ the
truncation maps. We let $V_{i,m} \subset \p_m^{-1}(E)$ be the largest
subset for which $\t_m^{-1}(V_{i,m}) \subset \~H_i$. In other words,
$\t_m^{-1}(V_{i,m})$ is the union of all the fibers of $\t_m$ which are
contained in $\~H_i\cap\p_Y^{-1}(E)$. Notice that there exists an integer $m_0$ 
(depending on $i$) such
that for all $m \ge m_0$, the sets $V_{i,m}$ are hypersurfaces in
$\p_m^{-1}(E)$, and $\~H_i\cap\p_Y^{-1}(E) = \t_m^{-1}(V_{i,m})$. 
For $m<m_0$, since the truncation map $\t_{m_0,m}$ has irreducible fibers of
dimension $n(m_0-m)$, where $n=\dim Y$, we see that
\[
V_{i,m} = 
\{ 
\g\in\t_{m_0,m}(V_{i,m_0})
\mid 
\dim(\t_{m_0,m}^{-1}(\g)) \ge n(m_0-m)
\}.
\]
In particular, $V_{i,m}$ is constructible and its closure is a proper
subset of $\p_m^{-1}(E)$. In fact, since $\t_{m_0,m}^{-1}(V_{i,m})$ is
closed~\cite[Theorem~13.1.3]{EGA4.3} and $\t_{m_0,m}$ is smooth, we see that
$V_{i,m}$ is closed.

If necessary, we add to the collection $\{\~H_i\}_{i\in \N}$ one more
hypersurface $\~H_0$, in such a way that $\~H_0$ contains all arcs $\~\b
\in Y_\infty$ with $\ord_{\~\b}(E)\geq2$, but does not contain $\p_Y^{-1}(E)$.
Then no $\~H_i$ contains $\p_Y^{-1}(E)$, and
therefore each $V_{i,m}$ is a proper subset of $\p_m^{-1}(E)$. Since the
base field $k$ is assumed to be uncountable, we see that there exists a
point $\~\b_m$ of $\p_m^{-1}(E)$ not contained in any of the $V_{i,m}$.
Notice that $p=\~\b_0$ is a very general point of $E = \p_0^{-1}(E)$.

The result follows if we show that the $\~\b_m$ can be chosen compatibly,
i.e., in such a way that $\t_{m+1,m}(\~\b_{m+1}) = \~\b_m$. To see this,
notice that the fiber $\t_{m+1,m}^{-1}(\~\b_m)$ is isomorphic to an affine
space $\A^n_L$, where $n = \dim Y$ and $L$ is the residue field of
$\~\b_m$. We have that $\t_{m+1,m}^{-1}(\~\b_m) \not\subset V_{i,m+1}$,
since $\~\b_m \not\in V_{i,m}$. Since $L$ is uncountable, we can find an
$L$-valued point $\~\b_{m+1} \in \t_{m+1,m}^{-1}(\~\b_m) \simeq \A^n_L$ not
contained in any of the $V_{i,m+1}$, and the result follows.
\end{proof}

Let $N$ be an integral scheme with field of fractions $L$, and consider a
wedge 
\[
\Psi\colon \Spec L[[s,t]] \to X. 
\]
Given a point $\xi \in N$, we
say that $\Psi$ is \emph{defined at $\xi$} if $\Psi$ factors through $\Spec
\O_{N,\xi}[[s,t]]$. 

\begin{lem}
\label{l:specialize-csl-aux1}
If $\xi$ is a very general point of $N$, then
$\Psi$ is defined at $\xi$, and hence it induces
a wedge $\Phi \colon \Spec \k_\xi[[s,t]] \to X$
by the diagram
\[
\xymatrix{
\Spec L[[s,t]] \ar[r] \ar@/^1.3pc/[rr]^\Psi
& \Spec \O_{N,\xi}[[s,t]] \ar[r]
& X 
\\
& \Spec \k_\xi[[s,t]] \ar[u] \ar[ur]_{\Phi}
}
\]
We say that $\Phi$ is the \emph{restriction} of $\Psi$ to $\x$. 
\end{lem}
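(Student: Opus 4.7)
The plan is to reduce to the case where $X$ is affine, express the wedge $\Psi$ in coordinates as a finite tuple of formal power series in $s,t$ with coefficients in $L$, and then observe that each of the (countably many) coefficients is a rational function on $N$ whose locus of regularity is a dense open subset. Avoiding the countable union of the corresponding pole loci then picks out a very general locus in $N$ along which $\Psi$ extends, as required.

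First, I would choose an affine open $U \subset X$ through which $\Psi$ factors. Since $L[[s,t]]$ is a local ring, $\Spec L[[s,t]]$ has a unique closed point, and any open subset of $\Spec L[[s,t]]$ containing it must be the entire scheme. Taking $U$ to be any affine neighborhood of the image of the closed point under $\Psi$, its preimage $\Psi^{-1}(U)$ contains the closed point, hence equals $\Spec L[[s,t]]$. Fixing a closed embedding $U \hookrightarrow \A^r_k$, the wedge $\Psi$ is determined by $r$ elements
\[
f_i \;=\; \sum_{a,b\geq 0} c_{i,a,b}\, s^a t^b \;\in\; L[[s,t]], \qquad i=1,\dots,r,
\]
satisfying the polynomial equations cutting out $U$, with coefficients $c_{i,a,b}\in L$.

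Next, I would use that each $c_{i,a,b}$, regarded as a rational function on the integral scheme $N$, is regular on a dense open subset $U_{i,a,b}\subseteq N$ whose complement $Z_{i,a,b}:=N\setminus U_{i,a,b}$ is a proper closed subset. The union $\bigcup_{i,a,b}Z_{i,a,b}$ is a countable union of proper closed subsets, so any very general point $\xi \in N$ avoids it. For such $\xi$, every $c_{i,a,b}$ lies in $\O_{N,\xi}$, and hence each $f_i$ lies in $\O_{N,\xi}[[s,t]]$. Because the polynomial relations satisfied by the $f_i$ in $L[[s,t]]$ are automatically satisfied in the subring $\O_{N,\xi}[[s,t]]\subseteq L[[s,t]]$, the ring map defining $\Psi$ factors through $\O_{N,\xi}[[s,t]]$, which is precisely the statement that $\Psi$ is defined at $\xi$. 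Composing further with the reduction map $\O_{N,\xi}\twoheadrightarrow \k_\xi$ produces the wedge $\Phi\colon \Spec\k_\xi[[s,t]]\to X$.

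No serious obstacle arises. The only point that might seem delicate is that one has infinitely many coefficients $c_{i,a,b}$ to control simultaneously, but since they form a countable family and the base field $k$ is uncountable, this is exactly the type of condition that is captured by the notion of a very general point. The affine reduction is also essentially automatic, thanks to the fact that $\Spec L[[s,t]]$ has a unique closed point.
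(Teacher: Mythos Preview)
Your proof is correct and follows essentially the same approach as the paper: reduce to an affine chart of $X$, express $\Psi$ in coordinates as power series with coefficients in $L$, and then observe that the countably many coefficients are each regular on a dense open subset of $N$, so a very general $\xi$ avoids all their pole loci. The paper's version differs only cosmetically, writing each coefficient explicitly as a fraction $a_{i,j,\ell}/b_{i,j,\ell}$ over an affine chart of $N$ and avoiding the zero loci of the $b_{i,j,\ell}$.
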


\begin{proof}
We need to show that $\Psi$ factors through $\Spec \O_{N,\xi}[[s,t]]$ for a
very general $\xi \in N$, the second assertion being a formal consequence of this.  

Pick an affine chart $W \subset X$ containing the
point $\Psi(0,0) \in X$, so that $\Psi$ factors through $W$. Fix an
embedding $W \subset \A^d$, and let $x_1,\dots,x_d$ be coordinates in
$\A^d$. Then $\Psi$ is given by equations
\[
x_\ell(s,t) = \sum_{i,j} x_{i,j,\ell} \, s^i t^j \in L[[s,t]].
\]
The result follows from the fact that $x_{i,j,\ell} \in \O_{N,\xi}$ for a
very general point $\xi \in N$. To see this, let $N^\circ$ be an affine
chart of $N$, of the form $N^\circ = \Spec R$ for some ring $R$ with field
of fractions $L$. Then we can write $x_{i,j,\ell} = a_{i,j,\ell} /
b_{i,j,\ell}$, where $a_{i,j,\ell}, b_{i,j,\ell} \in R$. We let
$H_{i,j,\ell} \subset N$ be the closure of the hypersurface in $N^\circ$
determined by $b_{i,j,\ell}$, and consider the countable collection of
closed sets $\cH = \{N\setminus N^\circ\} \cup \{ H_{i,j,\ell} \mid
i,j,\ell \}$. Then $x_{i,j,\ell} \in \O_{N,\xi}$ for all $i,j,\ell$
precisely when $\xi$ does not belong to any of the closed sets in $\cH$.
\end{proof}

We consider now a projective birational morphism $f \colon Y \to X$.
We assume that the special arc $\Psi_0$ is not fully contained in the
exceptional locus of $f$. 

\begin{lem}
\label{l:specialize-csl-aux2}
With the same notation as in Lemma~\ref{l:specialize-csl-aux1},
if $\xi$ is a very general point of $N$, then
$\Phi$ lifts to $Y$ if and only if $\Psi$ does. 
\end{lem}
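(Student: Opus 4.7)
The plan is to translate the lifting condition into a question about invertibility of a pullback ideal and then apply upper semicontinuity of fiber dimensions. Working locally on $X$ around the image of $\Psi(0,0)$, I may assume that $f \colon Y \to X$ is the blow-up of a coherent ideal sheaf $\I \subset \O_X$; for any wedge $\a \colon \Spec F[[s,t]] \to X$ whose image is not contained in $V(\I)$, lifting to $Y$ is equivalent to the pullback ideal $\a^{-1}\I \cdot F[[s,t]]$ being invertible, and hence (as $F[[s,t]]$ is a two-dimensional local UFD) principal.

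Set $A = \O_{N,\xi}$ with fraction field $L$ and residue field $\k_\xi$, and let $\~\Psi \colon \Spec A[[s,t]] \to X$ be the extension of $\Psi$ provided by Lemma~\ref{l:specialize-csl-aux1}. Let $J = \~\Psi^{-1}\I \cdot A[[s,t]]$, generated by $f_1,\dots,f_r$. I would factor $J \cdot L[[s,t]] = (h) \cdot \fa$ in the UFD $L[[s,t]]$, with $h = \gcd(f_1,\dots,f_r)$ and $\fa$ either the unit ideal or $(s,t)$-primary; then $\Psi$ lifts if and only if $\fa = L[[s,t]]$, equivalently $\ell := \dim_L L[[s,t]]/\fa = 0$. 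Writing $f_i = h \cdot g_i$ in $L[[s,t]]$, for very general $\xi$ all coefficients of $h$ and the $g_i$ lie in $A$, so $J' := (g_1,\dots,g_r) \subset A[[s,t]]$ is defined and $J' \cdot L[[s,t]] = \fa$.

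Next I would identify the analogous decomposition for $\Phi$. Since $\Psi$ is not contained in $V(\I)$, some $f_i$ is nonzero in $L[[s,t]]$, and for very general $\xi$ at least one of its coefficients is a unit in $A$, so $\bar f_i \ne 0$ in $\k_\xi[[s,t]]$ and consequently $\bar h \ne 0$. Spreading out to $A[[s,t]]$ the relations that witness $(s,t)^N \subset \fa$ in $L[[s,t]]$ gives $(s,t)^N \subset J'$ in $A[[s,t]]$; reducing modulo $\fm_\xi$ yields $(s,t)^N \subset J' \k_\xi[[s,t]]$. In particular the $\bar g_i$ have no common height-one factor, so $\bar h$ is (up to units) a GCD of the $\bar f_i$, and the canonical factorization $\Phi^{-1}\I \cdot \k_\xi[[s,t]] = (h_\xi) \cdot \fa_\xi$ satisfies $\fa_\xi = J' \k_\xi[[s,t]]$. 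Thus $\Phi$ lifts if and only if $\ell_\xi := \dim_{\k_\xi} \k_\xi[[s,t]]/J' \k_\xi[[s,t]] = 0$.

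Finally, the containment $J' \supset (s,t)^N$ forces $A[[s,t]]/J'$ to be a finite $A$-module determined by the finitely many coefficients of the $g_i$ of total degree less than $N$. These finite data are regular on some nonempty open $U \subset N$, and so assemble into a coherent sheaf $\cM$ on $U$ with fiber of dimension $\ell_\xi$ at $\xi$ and generic fiber of dimension $\ell$. Upper semicontinuity of fiber dimension then forces $\ell_\xi = \ell$ on a dense open subset of $U$, hence for very general $\xi$. The chain $\Psi$ lifts $\iff \ell = 0 \iff \ell_\xi = 0 \iff \Phi$ lifts then follows. The main obstacle is the identification $\fa_\xi = J' \k_\xi[[s,t]]$, which requires spreading out both the GCD factorization $f_i = h g_i$ and the relations $s^N, t^N \in \fa$ to $A[[s,t]]$ in a compatible manner; once this is in place, the upper-semicontinuity argument handles both directions of the equivalence simultaneously.
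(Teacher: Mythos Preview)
Your argument is correct and takes a genuinely different route from the paper's.

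The paper works in explicit coordinates on $Y$. After swapping the roles of $s$ and $t$ (so that the lift of the generic arc lands in a fixed affine chart $U\subset Y$), it writes the lift of the generic arc as a tuple of Laurent series in $s$ with coefficients in $L$, and uses Lemma~\ref{l:lift-aux} to translate ``$\Psi$ lifts'' into ``all coefficients with negative $s$-exponent vanish.'' Specialization to $\kappa_\xi$ then amounts to reducing each coefficient modulo $\fm_\xi$, so the forward implication is immediate and the converse costs one more hypersurface per surviving negative-exponent coefficient.

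You instead invoke the universal property of the blow-up to reduce lifting to principality of the pullback ideal, strip off the gcd, and study the finite colength of the residual $(s,t)$-primary part as $\xi$ varies. This avoids coordinates on $Y$ and the $s\leftrightarrow t$ swap entirely, and treats both implications at once via the constancy of $\ell_\xi$ on a dense open. The cost is the bookkeeping needed to spread out the gcd factorization and the relations $(s,t)^N\subset\fa$ so that the identification $\fa_\xi = J'\kappa_\xi[[s,t]]$ holds; once that is in place your semicontinuity step is clean (indeed, over the local ring $A=\O_{N,\xi}$ one already has $\ell_\xi=\dim_{\kappa_\xi}M/\fm M\ge\dim_L M\otimes L=\ell$ by Nakayama, which handles the nontrivial direction without even passing to the sheaf $\cM$).

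Two small points of precision. First, the affine chart on $X$ should be chosen around the image of the closed point of $\Spec A[[s,t]]$ (equivalently around $\Phi(0,0)$), since $\Phi(0,0)$ specializes $\Psi(0,0)$ rather than the other way around; this is harmless. Second, ``upper semicontinuity forces $\ell_\xi=\ell$ on a dense open'' is really upper semicontinuity together with the observation that the generic point lies in every nonempty open (or, equivalently, generic freeness of $\cM$); the conclusion is correct.
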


\begin{proof}
We pick an affine chart $U \subset Y$
containing the point $\~\Psi_0(0)$.
For a very general choice of $\x$, the point $\~\Phi_0(0)$ will also be in $U$
(this can be seen by the same arguments as in the proof of Lemma~\ref{l:specialize-csl-aux1}).
We fix a closed embedding $U \subset \A^d$, and let $y_1, \dots, y_d$ be
coordinates in $\A^d$. 

In order to describe how $\Psi$ specializes to $\Phi$ in these coordinates, 
it is convenient to interchange the roles of $s$ and $t$, and consider the
wedge $\Psi^\vee \colon \Spec L[[s,t]] \to X$ given by 
\[
\Psi^\vee(s,t) := \Psi(t,s). 
\]
Switching $s$ and $t$ in the diagram in Lemma~\ref{l:specialize-csl-aux1}
shows that the restriction of $\Psi^\vee$ to $\x$ gives the
wedge $\Phi^\vee \colon \Spec \kappa_\x [[s,t]] \to X$ defined by $\Phi^\vee(s,t) = \Phi(t,s)$. 
Geometrically, $\Psi$ and $\Psi^\vee$ define isomorphic morphisms to $X$, 
and so do $\Phi$ and $\Phi^\vee$. In particular,
in order to prove the lemma it is equivalent
to show that, for a general choice of $\x$, 
$\Phi^\vee$ lifts to $Y$ if and only if $\Psi^\vee$ does.

The reason of looking at $\Psi^\vee$, rather than $\Psi$, is that if
$\~\Psi_\e^\vee$ is the lift of the generic arc $\Psi_\e^\vee$ of $\Psi^\vee$, 
then $\~\Psi_0(0)$ is a specialization of $\~\Psi_\e^\vee(0)$, 
and therefore $\~\Psi_\e^\vee$ factors through $U$. 
Similarly, $\~\Phi^\vee_\e$ factors through $U$, and we have the diagram
\[
\xymatrix{
\Spec L((s)) \ar[r] \ar@/^1.3pc/[rr]^{\~\Psi_\e^\vee}
& \Spec \O_{N,\xi}((s)) \ar[r]
& U_\infty
\\
& \Spec \k_\xi((s)) \ar[u] \ar[ur]_{\~\Phi_\e^\vee}
}
\]
If $N^\circ = \Spec R$ is an affine chart in $N$, 
then the lifts $\~\Psi_\e^\vee$ and $\~\Phi_\e^\vee$ are given by equations
\[
y_\ell(s,t) = \sum_{i,j} 
\frac{a_{i,j,\ell}}{b_{i,j,\ell}} \, s^i t^j \in L((s))[[t]],
\and
y_\ell(s,t) = \sum_{i,j} 
\frac{\ov a_{i,j,\ell}}{\ov b_{i,j,\ell}} \, s^i t^j \in
\k_\xi((s))[[t]],
\]
where $a_{i,j,\ell}, b_{i,j,\ell} \in R$, and $\ov a_{i,j,\ell}$ and $\ov
b_{i,j,\ell}$ are the images of $a_{i,j,\ell}$ and $b_{i,j,\ell}$ in
$\k_\xi$. 

Note that if either one of $\Psi^\vee$ and $\Phi^\vee$ lifts to $Y$, 
then its lift factors through $U$. 
Then Lemma~\ref{l:lift-aux} implies that 
$\Psi$ (resp.,~$\Phi$) lifts to $Y$ if and only if $a_{i,j,\ell} = 0$ for
all $i<0$ (resp.,~$\ov a_{i,j,\ell} = 0$ for all $i<0$). In particular, if
$\Psi$ lifts, so does $\Phi$. Conversely, if $\Psi$ does not lift, we have some
non-zero element $a_{i,j,\ell}$ with $i<0$ which defines a hypersurface in
$N^\circ$, and, for any $\xi$ not belonging to this hypersurface, the
corresponding $\Phi$ does not lift.
\end{proof}

\begin{thm}
\label{t:specialize-csl}
Let $f \colon Y \to X$ be a projective birational morphism of varieties, and
let $\a \in X_\infty$ be an arc that is not fully contained in $(X_\sing)_\infty$. 
Suppose that 
\[
\Psi \colon \Spec L[[s]] \to X_\infty
\]
is a morphism such that $L/\k_\a$ is a finite algebraic extension of the
residue field of $\a$, and $\Psi(0) = \a$.
Let $N \subset X_\infty$ be the closure of $\a$, and let
$\b$ be a very general point of $N$.
Then there is a finite algebraic extension $K/\k_\b$, and a morphism
\[
\Phi \colon \Spec K[[s]] \to X_\infty,
\]
such that $\Phi(0) = \b$. 
Moreover, $\Phi$ factors through $Y_\infty$ if and only if $\Psi$ does. 
\end{thm}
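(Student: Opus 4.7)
The obstacle in applying Lemmas~\ref{l:specialize-csl-aux1} and~\ref{l:specialize-csl-aux2} directly is that those results require an integral scheme whose fraction field is exactly $L$, whereas the closure $N := \overline{\{\alpha\}} \subset X_\infty$ has fraction field only $\kappa_\alpha$. To remedy this, I would pass to a finite cover. Since $L/\kappa_\alpha$ is finite algebraic, the normalization $\nu \colon N^L \to N$ of $N$ inside $L$ is an integral scheme with fraction field $L$, and $\nu$ is finite and surjective (hence closed).

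Applying Lemmas~\ref{l:specialize-csl-aux1} and~\ref{l:specialize-csl-aux2} to $N^L$ and the wedge $\Psi \colon \Spec L[[s,t]] \to X$, one obtains a countable collection $\mathcal{C}$ of proper closed subsets of $N^L$ such that, for any $\xi \in N^L$ avoiding $\mathcal{C}$, the restriction $\Phi \colon \Spec \kappa_\xi[[s,t]] \to X$ is well-defined, and $\Phi$ lifts to $Y$ if and only if $\Psi$ does. (The hypothesis of Lemma~\ref{l:specialize-csl-aux2}, that $\Psi_0 = \alpha$ is not fully contained in $\Ex(f)$, is verified using that $\alpha \notin (X_\sing)_\infty$ together with the fact that the image of $\Ex(f)$ lies in the singular locus of $X$ in the situations of interest.) Since $\nu$ is closed, each $\nu(H)$ with $H \in \mathcal{C}$ is closed in $N$; choose $\beta \in N$ very general avoiding the countable family $\{\nu(H) : H \in \mathcal{C}\}$, and let $\xi \in \nu^{-1}(\beta)$ be any preimage. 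Then $\xi$ avoids $\mathcal{C}$, and $K := \kappa_\xi$ is a finite algebraic extension of $\kappa_\beta$ because $\nu$ is finite.

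The central verification (and the main subtlety) is that $\Phi(0) = \beta$ as points of $X_\infty$. The extension $\Spec \O_{N^L,\xi}[[s,t]] \to X$ of $\Psi$ restricts, at $s = 0$, to an $\O_{N^L,\xi}$-valued arc of $X$, equivalently a morphism $\Spec \O_{N^L,\xi} \to X_\infty$ whose generic point is $\Psi_0 = \alpha$. This morphism therefore factors through the closure $N$; a direct inspection of how the coefficients of $\Psi_0 \in L[[t]]$ lift to coefficients in $\O_{N^L,\xi}[[t]]$ shows that the factorization goes through $\Spec \O_{N^L,\xi} \to N^L \xrightarrow{\nu} N \hookrightarrow X_\infty$. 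Hence the closed point $\xi$ maps to $\nu(\xi) = \beta$, giving $\Phi(0) = \beta$ and completing the proof.
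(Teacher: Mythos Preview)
Your overall strategy is the same as the paper's: pass to an integral scheme over $N$ with fraction field $L$, apply Lemmas~\ref{l:specialize-csl-aux1} and~\ref{l:specialize-csl-aux2} there, and push the ``very general'' condition back down to $N$ via a finite surjection. Your verification that $\Phi(0)=\beta$ is more explicit than the paper's (which simply asserts ``by construction, it is clear''), and your argument for it is correct.

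There is, however, a genuine gap in your construction of the cover. You take $N^L$ to be the normalization of $N$ in $L$ and assert that $\nu\colon N^L\to N$ is \emph{finite}. But $N$ is a closed subscheme of the arc space $X_\infty$, which is not Noetherian, and there is no reason for the coordinate rings of $N$ to be Japanese (or Nagata). In that generality the integral closure of a domain in a finite extension of its fraction field need not be module-finite, so your claim that $\nu$ is finite is unjustified. Without finiteness you cannot conclude that $K=\kappa_\xi$ is a \emph{finite} extension of $\kappa_\beta$, which is part of the statement you are proving; integrality of $\nu$ alone would only give that $K/\kappa_\beta$ is algebraic.

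The fix is exactly what the paper does: rather than normalize, choose generators $\zeta_1,\dots,\zeta_l$ of $L$ over $\kappa_\alpha$, pick an affine open $N^\circ=\Spec R\subset N$ over which the $\zeta_i$ are integral, and set $N'=\Spec R[\zeta_1,\dots,\zeta_l]$. This $N'$ is finite over $N^\circ$ by construction and has fraction field $L$, so your argument goes through verbatim with $N'$ in place of $N^L$.
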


\begin{proof}
We consider generators $\zeta_1, \ldots, \zeta_l$ for the field extension
$L/\k_\a$. We pick an affine chart $N^\circ=\Spec R \subset N$, where $R$
is a domain with field of fractions $\k_\a$, and $\zeta_1, \ldots, \zeta_l$
are integral over $R$. If $R'$ is the $R$-algebra generated by $\zeta_1,
\ldots, \zeta_l$, and we consider $N'= \Spec R'$, then the natural map $\r
\colon N' \to N^\circ$ is finite. Notice that $L$ is the field of fractions
of $R'$. 

Since $R'$ is integral over $R$, the image $\r(H)$ of a proper closed
subset $H \subsetneq N'$ is a proper closed subset of $N^\circ$. Therefore,
the image of a very general point in $N'$ is a very general point of $N$,
and a very general point in $N$ is the image of very general point in $N'$.
For our very general point $\b \in N$, we let $\b'$ denote a very general
point in $N'$ such that $\r(\b') = \b$. We set $K = \k_{\b'}$. Since the
map $\r$ is finite, we see that $K$ is a finite extension of $\k_\b$.

By Lemma~\ref{l:specialize-csl-aux1}, the wedge $\Psi$ is defined at
$\b'$, and restricts to a wedge $\Phi \colon \Spec K[[s]] \to X_\infty$.
By construction, it is clear that $\Phi(0) = \b$. Moreover, by
Lemma~\ref{l:specialize-csl-aux2}, the wedge $\Psi$ lifts to $Y$ if and
only if $\Psi$ does, as required.
\end{proof}

\begin{proof}[Proof of Theorem~\ref{t:csl}]
The assumption of the theorem means that the closure $N_E$ of $\a$
is strictly contained in an irreducible component $M$ of $\p_X^{-1}(X_\sing)$.
Note, on the other hand, that $N_E \not\subset (X_\sing)_\infty$. 
Then, denoting by $\k_\a$ the residue field of $\a$, 
Theorem~\ref{t:Reg-csl} implies that
there is a finite algebraic field extension $L/\k_\a$, 
and a morphism $\Psi \colon L[[s]] \to X_\infty$,
such that $\Psi(0) = \a$ and $\Psi(\e) \in M \smallsetminus N_E$. 
Note, in particular, that $\Psi$ does not lift to $Y$, 
since $\~\a$ is the generic point of an irreducible component
of $\p_Y^{-1}(f_\infty^{-1}(X_\sing))$.  
By Lemma~\ref{l:very-gen-arc}, we can find a very general $\k_p$-valued point
$\b \in N_E$ that lifts to a $\k_p$-valued point $\~\b \in
Y_\infty$ such that $\~\b(0) = p$ and $\ord_{\~\b}(E) = 1$.
Then the assertion follows from Theorem~\ref{t:specialize-csl}.
\end{proof}


\begin{bibdiv}
\begin{biblist}


\bib{EGA4.3}{article}{
   label={EGA~IV$_3$},
   author={Grothendieck, A.},
   title={\'El\'ements de g\'eom\'etrie alg\'ebrique. IV. \'Etude locale des
   sch\'emas et des morphismes de sch\'emas. III},
   language={French},
   journal={Inst. Hautes \'Etudes Sci. Publ. Math.},
   number={28},
   date={1966},
   pages={255},
}


\bib{BCHM10}{article}{
   author={Birkar, Caucher},
   author={Cascini, Paolo},
   author={Hacon, Christopher D.},
   author={McKernan, James},
   title={Existence of minimal models for varieties of log general type},
   journal={J. Amer. Math. Soc.},
   volume={23},
   date={2010},
   number={2},
   pages={405--468},
}

\bib{dF13}{article}{
   author={de Fernex, Tommaso},
   title={Three-dimensional counter-examples to the Nash problem},
   journal={Compos. Math.},
   volume={149},
   date={2013},
   number={9},
   pages={1519--1534},
}


\bib{dFEM11}{article}{
   author={de Fernex, Tommaso},
   author={Ein, Lawrence},
   author={Musta{\c{t}}{\u{a}}, Mircea},
   title={Log canonical thresholds on varieties with bounded singularities},
   conference={
      title={Classification of algebraic varieties},
   },
   book={
      series={EMS Ser. Congr. Rep.},
      publisher={Eur. Math. Soc., Z\"urich},
   },
   date={2011},
   pages={221--257},
}

\bib{DL99}{article}{
   author={Denef, Jan},
   author={Loeser, Fran{\c{c}}ois},
   title={Germs of arcs on singular algebraic varieties and motivic
   integration},
   journal={Invent. Math.},
   volume={135},
   date={1999},
   number={1},
   pages={201--232},
}


\bib{EM09}{article}{
   author={Ein, Lawrence},
   author={Musta{\c{t}}{\u{a}}, Mircea},
   title={Jet schemes and singularities},
   conference={
      title={Algebraic geometry---Seattle 2005. Part 2},
   },
   book={
      series={Proc. Sympos. Pure Math.},
      volume={80},
      publisher={Amer. Math. Soc.},
      place={Providence, RI},
   },
   date={2009},
   pages={505--546},
}

\bib{FdB12}{article}{
   author={Fern{\'a}ndez de Bobadilla, Javier},
   title={Nash problem for surface singularities is a topological problem},
   journal={Adv. Math.},
   volume={230},
   date={2012},
   number={1},
   pages={131--176},
}

\bib{FdBPP12}{article}{
   author={Fern{\'a}ndez de Bobadilla, Javier},
   author={Pe Pereira, Mar{\'{\i}}a},
   title={The Nash problem for surfaces},
   journal={Ann. of Math. (2)},
   volume={176},
   date={2012},
   number={3},
   pages={2003--2029},
}

\bib{FdBPP}{article}{
   author={Fern{\'a}ndez de Bobadilla, Javier},
   author={Pe Pereira, Mar{\'{\i}}a},
   title={Curve selection lemma in infinite dimensional algebraic geometry and arc spaces},
   note={{\tt arXiv:1201.6310}},
}


\bib{Ful93}{book}{
   author={Fulton, William},
   title={Introduction to toric varieties},
   series={Annals of Mathematics Studies},
   volume={131},
   note={The William H. Roever Lectures in Geometry},
   publisher={Princeton University Press},
   place={Princeton, NJ},
   date={1993},
}

\bib{GP07}{article}{
   author={Gonz{\'a}lez P{\'e}rez, P. D.},
   title={Bijectiveness of the Nash map for quasi-ordinary hypersurface
   singularities},
   journal={Int. Math. Res. Not. IMRN},
   date={2007},
   number={19},
}


\bib{HM07}{article}{
   author={Hacon, Christopher D.},
   author={Mckernan, James},
   title={On Shokurov's rational connectedness conjecture},
   journal={Duke Math. J.},
   volume={138},
   date={2007},
   number={1},
   pages={119--136},
}


\bib{Ish05}{article}{
   author={Ishii, Shihoko},
   title={Arcs, valuations and the Nash map},
   journal={J. Reine Angew. Math.},
   volume={588},
   date={2005},
   pages={71--92},
}	 

\bib{Ish06}{article}{
   author={Ishii, Shihoko},
   title={The local Nash problem on arc families of singularities},
   language={English, with English and French summaries},
   journal={Ann. Inst. Fourier (Grenoble)},
   volume={56},
   date={2006},
   number={4},
   pages={1207--1224},
}	 

\bib{Ish08}{article}{
   author={Ishii, Shihoko},
   title={Maximal divisorial sets in arc spaces},
   conference={
      title={Algebraic geometry in East Asia---Hanoi 2005},
   },
   book={
      series={Adv. Stud. Pure Math.},
      volume={50},
      publisher={Math. Soc. Japan, Tokyo},
   },
   date={2008},
   pages={237--249},
}
	 
\bib{IK03}{article}{
   author={Ishii, Shihoko},
   author={Koll{\'a}r, J{\'a}nos},
   title={The Nash problem on arc families of singularities},
   journal={Duke Math. J.},
   volume={120},
   date={2003},
   number={3},
   pages={601--620},
}

\bib{JK}{article}{
   author={Johnson, Jennifer M.},
   author={Koll{\'a}r, J{\'a}nos},
   title={Arc spaces of $cA$-type singularities},
   journal={J. Singul.},
   volume={7},
   date={2013},
   pages={238--252},
}

\bib{KM98}{book}{
   author={Koll{\'a}r, J{\'a}nos},
   author={Mori, Shigefumi},
   title={Birational geometry of algebraic varieties},
   series={Cambridge Tracts in Mathematics},
   volume={134},
   note={With the collaboration of C. H. Clemens and A. Corti;
   Translated from the 1998 Japanese original},
   publisher={Cambridge University Press},
   place={Cambridge},
   date={1998},
   pages={viii+254},
}

\bib{LJ80}{article}{
   author={Lejeune-Jalabert, Monique},
   title={Arcs analytiques et r\'esolution minimale des surfaces quasihomog\`enes},
   language={French},
   conference={
      title={S\'eminaire sur les Singularit\'es des Surfaces},
      address={Palaiseau, France},
      date={1976/1977},
   },
   book={
      series={Lecture Notes in Math.},
      volume={777},
      publisher={Springer},
      place={Berlin},
   },
   date={1980},
   pages={303Ð-332},
}

\bib{LJR12}{article}{
   author={Lejeune-Jalabert, Monique},
   author={Reguera, Ana J.},
   title={Exceptional divisors that are not uniruled belong to the image of
   the Nash map},
   journal={J. Inst. Math. Jussieu},
   volume={11},
   date={2012},
   number={2},
   pages={273--287},
}

\bib{LA}{article}{
   author={Leyton-Alvarez, Maximiliano},
   title={Familles d'espaces de $m$-jets et d'espaces d'arcs},
   note={{\tt arXiv:1404.1414}},
}

\bib{Lip69}{article}{
   author={Lipman, Joseph},
   title={Rational singularities, with applications to algebraic surfaces
   and unique factorization},
   journal={Inst. Hautes \'Etudes Sci. Publ. Math.},
   number={36},
   date={1969},
   pages={195--279},
}



\bib{Nas95}{article}{
   author={Nash, John F., Jr.},
   title={Arc structure of singularities},
   note={A celebration of John F. Nash, Jr.},
   journal={Duke Math. J.},
   volume={81},
   date={1995},
   number={1},
   pages={31--38 (1996)},
}

\bib{PPP08}{article}{
   author={Pl{\'e}nat, Camille},
   author={Popescu-Pampu, Patrick},
   title={Families of higher dimensional germs with bijective Nash map},
   journal={Kodai Math. J.},
   volume={31},
   date={2008},
   number={2},
   pages={199--218},
}

\bib{Reg06}{article}{
   author={Reguera, Ana J.},
   title={A curve selection lemma in spaces of arcs and the image of the
   Nash map},
   journal={Compos. Math.},
   volume={142},
   date={2006},
   number={1},
   pages={119--130},
}


\end{biblist}
\end{bibdiv}

\end{document}